\newcommand{\E}{\mathbb E}
\newcommand{\PP}{\mathbb P}
\newcommand{\Z}{\mathbb Z}
\newcommand{\R}{\mathbb R}
\newcommand{\LL}{\mathbb L}
\newcommand{\cA}{\mathcal A}
\newcommand{\cB}{\mathcal B}
\newcommand{\cE}{\mathcal E}
\newcommand{\cC}{\mathcal E}
\newcommand{\cg}{\mathcal G}
\newcommand{\tu}{\tilde u}
\newcommand{\tv}{\tilde v}
\newcommand{\fC}{\mathcal C}
\newcommand{\bn}{\mathbf n}
\newcommand{\boo}{\mathbf 0}
\newcommand{\bk}{\mathbf k}
\newcommand{\bbk}{\overline{\mathbf k}}
\newcommand{\hbk}{\tilde{\mathbf k}}
\DeclareMathOperator{\Exp}{Exp}
\begin{document}
\section{Introduction}
As a model of fluid flow in a random medium, first passage percolation (FPP) has been studied by probabilists for more than fifty years, while many predictions about its geometric structure remain unsettled.
One major prediction is that the planar FPP belongs to the so-called KPZ universality class, proposed by Kardar, Parisi, and Zhang in their seminal work \cite{kardar1986dynamic}. 
While little progress has been made to rigorously establish this prediction for the planar FPP, similar results are known for some exactly solvable directed last passage percolation (DLPP) models, where exact distributional formulas exist and are obtained from combinatorics, representation theory, or random matrix theory.
See e.g. \cite{corwin2012kardar, quastel2014airy} for surveys in this direction.

For general planar FPP, due to the absence of such formulas, its study relies more on understanding of the geodesics (i.e. minimal weight paths between points). In particular, coalescence of geodesics has been wildly used in obtaining geometric information of the model.
The study in this direction was initiated by Newman and co-authors, see e.g. \cite{newman1995surface}.
A breakthrough was then made by Hoffman \cite{hoffman2008geodesics}, where he used Busemann functions to study infinite geodesics.
These techniques then led to more progress in the geometric structure of geodesics, see e.g. \cite{damron2014busemann, damron2017bigeodesics}.

For exactly solvable DLPP models, 
there are also several motivations to study the geometry of geodesics (which are maximal weight paths under this setting).
For example, for models obtained by adding local defects to exactly solvable ones, the integrable structures are destroyed, thus the geometric properties of the geodesics play an important role in the study of such models; see e.g. \cite{basu2014last}, where the authors settled the ``slow bond problem'', where extra weights are added to the diagonal in the exactly solvable DLPP models.
Besides,
the coalescence of geodesics in exactly solvable models would help to understand the geometry of the scaling limiting objects of the KPZ universality class, e.g. the coalescence structure in Brownian LPP is used towards understanding Brownian regularity of the Airy process \cite{hammond2019patchwork, hammond2020exponents}.

In recent years there are several results about geodesics in DLPP models, see e.g. \cite{coupier2011multiple, ferrari2005competition};
and some of these results have been proved beyond exactly solvable models, see \cite{georgiou2017geodesics, georgiou2017stationary}.
There is the problem of the speed of coalescence, i.e. to understand the distribution of the coalescence location of two geodesics.
For two semi-infinite geodesics in the same direction, such problem was first studied in \cite{wuthrich2002asymptotic}.
A lower bound of the tail of the distribution was obtained by Pimentel in \cite{pimentel2016duality}, using a duality argument.
A corresponding upper bound was conjectured in \cite{pimentel2016duality}, and
this was settled by Basu, Sarkar, and Sly in \cite{basu2017coalescence}.
In a different line of works, using inputs from the connection of the stationary LPP with queuing theory, Sepp{\"a}l{\"a}inen and Shen (\cite{seppalainen2020coalescence}) also obtained an upper bound which falls short of the optimal order by a logarithmic factor. They also obtained upper and lower bounds of matching orders for the probability of the coalescence location being very close to the starting points.
More recently, in \cite{busani2020universality} various coalescence results are obtained, and are used to deduce universality of geodesic trees.

In this note, we study coalescence of finite geodesics, in the model of DLPP with exponential weights.
We consider two geodesics, from two distinct points to the same finite point in the $(1, 1)$ direction, and study
the tail of the distribution of their coalescence location.
This problem was also studied in \cite{basu2017coalescence}, where the tail of the distribution of the coalescence location was conjectured to have the same order as that of semi-infinite geodesics, and the authors also gave an upper bound of polynomial decay.
We settle this problem in this note, by providing matching upper and lower bounds up to a constant factor.
Very soon since this note was posted, similar result was also obtained in \cite{balazs2020local}, with slightly more restricted conditions (using notations of Theorem \ref{thm:main} below, in \cite{balazs2020local} the estimates are proven for $n>(Rk)^5$, while we only assume that $n>Rk$).

As \cite{basu2017coalescence}, our proofs work, essentially verbatim, for several generalizations; while we state and prove our results in the current form for technical and notational convenience.
First, following our arguments, the same result also holds when consider two finite geodesics in any fixed direction (rather than the $(1, 1)$ direction), except the axial directions.
Besides, our arguments generalize to some other exactly solvable models beyond DLPP with exponential weights.
These include Poissonian DLPP in continuous space $\R^2$, and DLPP with geometric weights.
The difference is that, unlike DLPP with exponential weights, under these two settings the geodesics are not almost surely unique. Thus we consider the right-most (or left-most) geodesics between pairs of points, and study their coalescence location instead. The same estimates as our main result (Theorem \ref{thm:main} below) can be obtained.
For these two settings, we would also need the Tracy-Widom limit and one point upper and lower tail moderate deviation estimates for the last passage times (as given by \cite{johansson2000shape, LR10} for exponential DLPP).
For Poissionian DLPP these can be found in \cite{lowe2001moderate, lowe2002moderate}, and for geometric DLPP these are in \cite{26f0ff4cdf2a4bd6aa9281654a909d53, corwin2016fluctuations}.

\subsection{Notations, main results, and proof ideas}
We set up notations for the model and formally state our results here.
Consider the 2D lattice $\Z^2$.
For each $v \in \Z^2$ we associate a weight $\xi_{v}$, which is distributed as $\Exp(1)$ and independent from each other.
For any upper-right oriented path $\gamma$ in $\Z^2$, we define the \emph{passage time of the path} to be
\[
T(\gamma) := \sum_{v \in \gamma} \xi_v .
\]
For any $u, v \in \Z^2$, we denote $u\leq v$, if $u$ is less or equal to $v$ in each coordinate;
and we denote $u<v$ if $u\leq v$ and $u\neq v$.
For any $u < v \in \Z^2$, there are finitely many upper-right paths from $u$ to $v$.
Almost surely, there is a unique one $\gamma$ with the largest $T(\gamma)$.
We denote it to be the \emph{geodesic} $\Gamma_{u,v}$, and $T_{u,v}:=T(\gamma)$ to be the \emph{passage time from $u$ to $v$}.
For any $u=(u_1, u_2) \in \Z^2$, we denote $d(u):=u_1+u_2$.
For each $n \in \Z$, we denote $\LL_{n}$ to be the line $\{v \in \Z^2: d(v)= n \}$.
For any $u, v < w \in \Z^2$, we denote $\fC^{u, v; w}=(\fC^{u, v; w}_1, \fC^{u, v; w}_2) \in \Z^2$ to be the \emph{first coalescence point} of $\Gamma_{u, w}$ and $\Gamma_{v, w}$, i.e. $\fC^{u, v; w} \in \Gamma_{u, w}\cap \Gamma_{v, w}$ with the smallest $d(\fC^{u, v; w})$.

For any $n, k \in \Z_+$, we denote $\bn:=(n,n)$ and $\bbk:=(\lfloor k^{2/3} \rfloor, -\lfloor k^{2/3} \rfloor)$. In particular, we let $\boo:=(0,0)$.
Our result is about the location of $\fC^{\bbk, -\bbk; \bn}$.
\begin{theorem}   \label{thm:main}
There exists universal constants $C_1, C_2, R_0 > 0$, such that for any $k, n \in \Z_+$, $R > R_0$, and $n > Rk$, we have
\[
C_1R^{-2/3} < \PP[ d(\fC^{\bbk, -\bbk; \bn} )> Rk ] < C_2R^{-2/3}.\]
\end{theorem}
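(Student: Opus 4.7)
The plan is to analyze coalescence via the transverse positions of the two geodesics at the intermediate line $\LL_{Rk}$, combined with the ordering of competing geodesics in exponential LPP and KPZ-type moderate-deviation tail estimates for passage times.

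\textbf{Reduction.} Let $X_L$ (resp.\ $Y_L$) denote the $x$-coordinate where $\Gamma_{\bbk, \bn}$ (resp.\ $\Gamma_{-\bbk, \bn}$) crosses $\LL_L$. The standard ordering lemma for geodesics with ordered endpoints implies that until their first coalescence $\Gamma_{\bbk, \bn}$ lies strictly to the lower-right of $\Gamma_{-\bbk, \bn}$, so the event $\{d(\fC^{\bbk,-\bbk;\bn}) > Rk\}$ coincides with $\{X_{Rk} > Y_{Rk}\}$. The heuristic for the exponent: at level $Rk$ the KPZ transverse scale is $(Rk)^{2/3}$, while the initial offset is $2k^{2/3} = 2R^{-2/3}(Rk)^{2/3}$, exactly a factor $R^{-2/3}$ of the natural scale. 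Making this heuristic precise is the whole game.

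\textbf{Lower bound.} I would construct an event of probability at least $c R^{-2/3}$ that forces $X_{Rk} > Y_{Rk}$. One route is to condition on the passage-time profile $v \mapsto T_{v, \bn}$ restricted to a fine grid on $\LL_{Rk}$ having a decreasing-in-$x$ shape of the right size, via one-point lower tails from \cite{johansson2000shape, LR10} combined with FKG. An alternative is to couple the initial segments of the finite geodesics with their semi-infinite analogues in direction $(1,1)$ and invoke the semi-infinite lower bound of \cite{basu2017coalescence}; this is cleaner, provided the coupling error is controlled uniformly in the mild regime $n > Rk$.

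\textbf{Upper bound (main obstacle).} Here the plan is to express $\{X_{Rk} > Y_{Rk}\}$ via an exchange argument on pairs of disjoint crossings of $\LL_{Rk}$, rewriting it as a monotonicity violation of the difference profile $v \mapsto T_{\bbk, v} - T_{-\bbk, v}$ on $\LL_{Rk}$ over a transverse window of scale $\sim k^{2/3}$. Decomposing the event over dyadic scales in the transverse location of the putative coalescence point and bounding each piece by Tracy--Widom moderate deviation tails from \cite{johansson2000shape, LR10} should yield the $R^{-2/3}$ bound. The principal difficulty is that the passage times involved are strongly correlated along $\LL_{Rk}$, so a naive union bound would miss the sharp exponent (as in \cite{seppalainen2020coalescence}, which loses a logarithmic factor). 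Overcoming this requires partitioning the strip between $\LL_0$ and $\LL_{Rk}$ into substrips at geometric scales $k, 2k, 4k, \ldots$ and carefully exploiting decorrelation of passage times across disjoint regions. A secondary subtlety is handling the condition $n > Rk$ uniformly, rather than the more restrictive $n > (Rk)^5$ of \cite{balazs2020local} under which one could simply replace finite geodesics by semi-infinite ones.
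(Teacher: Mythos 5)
Your reduction to the event $\{X_{Rk} > Y_{Rk}\}$ is correct (it is the paper's Lemmas \ref{l:intersec}, \ref{l:ordered}), and the heuristic scaling argument identifying the exponent $2/3$ is the right intuition. But what you sketch afterwards misses the one idea that makes the paper work, and the routes you propose are essentially the ones the paper explains were already known and fell short.

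The central difficulty, which you correctly flag, is that the two finite geodesics $\Gamma_{\bbk,\bn}$ and $\Gamma_{-\bbk,\bn}$ are not equal in law, so the ``magnification'' used for semi-infinite geodesics in \cite{basu2017coalescence} --- replacing a small probability by an expected intersection count of $\sim R^{2/3}$ i.i.d.-in-law geodesics with a transversal line --- is not available directly. The paper's key move is to exploit invariance under rotation by $\pi$ about $\frac{\bbk+\bn}{2}$: the event that $\Gamma_{\bbk,\bn}$ and $\Gamma_{-\bbk,\bn}$ have not merged by $\LL_{\lfloor Rk\rfloor}$ is comparable, within a factor $2$, to the event that the \emph{parallel} geodesics $\Gamma_{-\bbk,\bn}$ and $\Gamma_{\bbk,\bn+2\bbk}$ disagree on $\LL_{\lfloor Rk\rfloor}$ (Lemma \ref{lem:redtoparall}). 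Because these two are translates of one another, one can stack $\sim R^{2/3}$ of them at spacing $2\bbk$ and write the non-coalescence probability as $\frac{1}{m}\bigl(\E|\LL_r\cap\cup_i\Gamma_{u_i,v_i}| - 1\bigr)$, then show this expectation is $\Theta(1)$ using transversal fluctuation bounds (Proposition \ref{prop:spaflu}) for the lower bound and the disjoint-geodesics estimate of \cite{basu2018nonexistence} (Proposition \ref{prop:numnoncoalgeo}) for the upper bound. Neither FKG on passage-time profiles nor a comparison with semi-infinite geodesics appears.

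Concretely, for your lower bound: the coupling-with-semi-infinite-geodesics route is precisely what \cite{balazs2020local} carries out, and as you note it forces $n > (Rk)^5$ unless the coupling error is controlled much more finely than the known estimates allow; the FKG route is left unexplained at the step where a ``decreasing profile'' of $T_{v,\bn}$ on $\LL_{Rk}$ would imply non-coalescence. For your upper bound: the ``exchange argument'' is not defined, and the dyadic/geometric multi-scale decomposition you propose is the mechanism of \cite[Theorem 1]{basu2017coalescence}, which produced $C R^{-c}$ for an unspecified $c>0$, not $c = 2/3$. Asserting that ``carefully exploiting decorrelation across disjoint regions'' yields the sharp exponent is the whole problem, not a step in its solution. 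Without the rotation trick (or some substitute that renders the two geodesics equal in law and permits magnification), the proposal as written does not close.
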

Denote $\hbk:=(0, \lfloor k^{2/3} \rfloor)$.
In \cite[Theorem 1]{basu2017coalescence} the problem was presented in a slightly different setting,
where the first coordinate of $\fC^{\boo, \hbk; \bn}$ was studied; and it was shown there that $\PP[\fC^{\boo, \hbk; \bn}_1>Rk]<CR^{-c}$, for some constants $C, c > 0$.
Our result confirms that the optimal $c$ is $\frac{2}{3}$.
\begin{corollary}\label{cor:bsssetting}
There exists universal constants $C_1', C_2', R_0 > 0$, such that for any $k, n \in \Z_+$, $R > R_0$, and $n > 4Rk$, we have
\[
C_1'R^{-2/3} < \PP[\fC^{\boo, \hbk; \bn}_1>Rk] < C_2'R^{-2/3}.
\]
\end{corollary}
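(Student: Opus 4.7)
I would deduce the corollary from Theorem~\ref{thm:main} by combining the planar monotonicity of geodesics with a refined construction for the lower bound. Equip $\Z^2$ with the partial order $u \preceq v$ iff $u_1 \leq v_1$ and $u_2 \geq v_2$ (so $u$ lies weakly upper-left of $v$). A direct check gives
\[
-\bbk \preceq \hbk \preceq \boo \preceq \bbk.
\]
Since geodesics to a common endpoint are almost surely unique in exponential LPP, they are nested in this order, and consequently the inner coalescence level is bounded by the outer one:
\[
d(\fC^{\boo, \hbk; \bn}) \leq d(\fC^{-\bbk, \bbk; \bn}).
\]

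For the upper bound, observe that $\fC^{\boo, \hbk; \bn}$ lies on $\Gamma_{\hbk, \bn}$ and hence satisfies $\fC^{\boo, \hbk; \bn} \geq \hbk$ componentwise; in particular its second coordinate is at least $\lfloor k^{2/3}\rfloor \geq 0$, giving $d(\fC^{\boo, \hbk; \bn}) \geq \fC^{\boo, \hbk; \bn}_1$. Combined with the nesting,
\[
\{\fC^{\boo, \hbk; \bn}_1 > Rk\} \subseteq \{d(\fC^{\boo, \hbk; \bn}) > Rk\} \subseteq \{d(\fC^{-\bbk, \bbk; \bn}) > Rk\},
\]
and the upper bound in Theorem~\ref{thm:main} (applicable since $n > 4Rk > Rk$) gives the claimed $C_2' R^{-2/3}$ estimate.

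For the lower bound the nesting above points the wrong way, so Theorem~\ref{thm:main} does not apply directly. The plan is to construct an event $\cE \subseteq \{\fC^{\boo, \hbk; \bn}_1 > Rk\}$ of probability $\gtrsim R^{-2/3}$ by revisiting the construction producing the lower bound of Theorem~\ref{thm:main}. That construction yields a sub-event on which the outer geodesics $\Gamma_{-\bbk, \bn}$ and $\Gamma_{\bbk, \bn}$ are forced to pass through specified, well-separated lattice points at anti-diagonal level $\sim Rk$, with separation of order $(Rk)^{2/3}$. Using independence of weights in disjoint regions together with moderate-deviation estimates for last passage times, one argues that conditional on such a sub-event, the inner geodesics $\Gamma_{\boo, \bn}$ and $\Gamma_{\hbk, \bn}$ also remain separated at the vertical line of first coordinate $Rk$ with positive probability. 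The main obstacle is exactly this conditional step: since the inner pair coalesces \emph{more easily} than the outer pair (by the very monotonicity used above), one must carefully decouple the behavior of the inner pair from that of the outer; the extra factor of $4$ in the assumption $n > 4Rk$ accommodates the additional room needed to carry out these deviation estimates in a neighborhood of the forced separation level.
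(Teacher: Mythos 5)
Your upper bound is correct and coincides with the paper's argument: geodesic ordering gives $\fC^{\boo,\hbk;\bn}\leq \fC^{\bbk,-\bbk;\bn}$, and $d(\fC^{\boo,\hbk;\bn})\geq \fC^{\boo,\hbk;\bn}_1$ since both coordinates of the coalescence point are nonnegative.

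Your lower bound, however, contains a genuine gap. You correctly note that the nesting inequality runs the wrong way, but the remedy you propose---constructing a sub-event of the lower-bound construction from Theorem~\ref{thm:main}, conditioning on it, and arguing via independence in disjoint regions that the inner geodesics also stay separated---is only a sketch, and it is not at all clear it could be carried out. In fact, the most delicate part is precisely what you flag as ``the main obstacle'': the inner pair $\Gamma_{\boo,\bn}, \Gamma_{\hbk,\bn}$ is nested strictly inside the outer pair, so the outer pair remaining separated at level $Rk$ gives no usable information about the inner pair; the conditional law of the inner geodesics given the outer ones is not tractable, and ``independence in disjoint regions'' does not apply because all four geodesics share weights in the same central corridor. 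This route is far harder than what is needed. The paper instead uses a short reflection-symmetry argument that you appear to have missed: introduce the mirror image $\hbk'=(\lfloor k^{2/3}\rfloor, 0)$ of $\hbk$ across the diagonal. By Lemma~\ref{l:intersec} (applied to the nested triple $\hbk \prec \boo \prec \hbk'$ feeding into $\bn$), one has $d(\fC^{\hbk,\hbk';\bn})=\max\{d(\fC^{\boo,\hbk;\bn}),\,d(\fC^{\boo,\hbk';\bn})\}$, and by the model's diagonal symmetry $\PP[d(\fC^{\boo,\hbk;\bn})>4Rk]=\PP[d(\fC^{\boo,\hbk';\bn})>4Rk]$; hence
\[
2\,\PP[d(\fC^{\boo,\hbk;\bn})>4Rk]\ \geq\ \PP[d(\fC^{\hbk,\hbk';\bn})>4Rk],
\]
and the right side is $\gtrsim R^{-2/3}$ by Theorem~\ref{thm:main} applied to the pair $(\hbk,\hbk')$ (which is a translate of a $(\pm\bbk')$ configuration). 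Finally, passing from $d(\fC^{\boo,\hbk;\bn})>4Rk$ to $\fC^{\boo,\hbk;\bn}_1>Rk$ is done by Proposition~\ref{prop:spaflu}: the two statements differ only if the geodesic $\Gamma_{\boo,\bn}$ deviates linearly from the diagonal at level $\sim Rk$, which has probability $e^{-c(Rk)^{2/3}}$, negligible against $R^{-2/3}$. This is also the real role of the factor $4$ in the hypothesis $n>4Rk$---it is not to ``make room for deviation estimates in the decoupling'' as you suggest, but simply to ensure that a point on $\LL_{>4Rk}$ near the diagonal automatically has first coordinate exceeding $Rk$.
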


Let us explain the main ideas and difficulties in proving these results, and the new ingredients based on previous works.
To get matching upper and lower bounds for the coalescence probability $\PP[d(\fC^{\bbk, -\bbk; \bn})>Rk]$, which is a small quantity, a general idea from \cite{basu2017coalescence} is to ``magnify'' it, i.e. to prove that multiplied by $R^{2/3}$ it is upper and lower bounded by constants.
For semi-infinite geodesics in the same direction, all of them have the same distribution (under translations).
The argument in \cite{basu2017coalescence} is to 
take $\lfloor CR^{2/3} \rfloor+1$ points in $\LL_0$ of equal distances (for some constant $C$), and take the $(1,1)$-direction semi-infinite geodesic from each of them.
For any two ``neighboring'' semi-infinite geodesics, the probability that they do not coalesce before $\LL_{\lfloor Rk \rfloor}$ is the same by the translation invariance.
Then this probability multiplied by $\lfloor CR^{2/3} \rfloor$ is precisely the expected number of different intersecting points of these $\lfloor CR^{2/3} \rfloor+1$ semi-infinite geodesics with $\LL_{\lfloor Rk \rfloor}$, minus $1$.
This quantity is shown to be in constant order, using results from \cite{basu2018nonexistence, basu2014last}.

However, to study coalescence of finite geodesics ending at the same point, one could not directly magnify the probability in a similar way. The main difference is that finite geodesics (from different points to the same end point) do not have the same distribution.
Thus approaches from some different directions were taken to study coalescence of finite geodesics.
In \cite{basu2017coalescence}, a multi-scale argument is used; and the bounds in \cite{balazs2020local} are obtained from a comparison between finite and semi-infinite geodesics.
Our main contribution is a short geometric construction utilizing invariances of this model in a novel way, which allows us to still magnify the coalescence probability by $R^{2/3}$, and to get matching bounds with the least requirements.
Specifically, in addition to translation invariance, a key and simple observation is that the model is also invariant under rotation by $\pi$.
With this, we convert the study of coalescence of $\Gamma_{-\bbk, \bn}$ and $\Gamma_{\bbk, \bn}$ to that of coalescence of $\Gamma_{-\bbk, \bn}$ and $\Gamma_{\bbk, \bn+2\bbk}$.
The advantage of the later one is that $\Gamma_{\bbk, \bn+2\bbk}$ has the same law as $\Gamma_{-\bbk, \bn}$, after translating by $2\bbk$ (and we call them \emph{parallel} geodesics).
Then to do the magnification, we make $\lfloor CR^{2/3}\rfloor$ translations by $2\bbk$ each time, and consider a family of $\lfloor CR^{2/3}\rfloor+1$ parallel geodesics.
It remains to study the number of their intersections with $\LL_{\lfloor Rk \rfloor}$, and for this we adapt arguments from \cite{basu2017coalescence} and also use results from \cite{basu2018nonexistence, basu2014last}.

\section{Geometric construction and parallel geodesics}
\subsection{Preliminaries}
We start with some basic geometric properties of geodesics.
A first observation is that, for any $u\leq v\in\Z^2$, if $u'\leq v'$ and $u',v'\in\Gamma_{u,v}$, we must have that $\Gamma_{u',v'}\subset \Gamma_{u,v}$, and $\Gamma_{u',v'}$ is the part of $\Gamma_{u,v}$ between $u'$ and $v'$ (including $u', v'$).
This immediately leads to the following result.
\begin{lemma}  \label{l:intersec}
Take points $u<v$ and $u'<v'$.
Then $\Gamma_{u,v}\cap\Gamma_{u',v'}$ is either empty, or equals $\Gamma_{u'',v''}$ for some $u''\leq v''$.
\end{lemma}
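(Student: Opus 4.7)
The plan is to exploit the fact that an upper-right oriented path has totally ordered vertex set under the coordinatewise partial order $\leq$, and then reduce the statement to the observation preceding the lemma (that sub-paths of geodesics are themselves geodesics between their endpoints).

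Assume $\Gamma_{u,v}\cap\Gamma_{u',v'}$ is non-empty. First I would note that any two vertices of an upper-right oriented path are comparable under $\leq$, so the intersection, viewed as a subset of $\Z^2$, is totally ordered. Since it is finite and non-empty, I can define $u''$ to be its $\leq$-minimum and $v''$ to be its $\leq$-maximum, giving $u''\leq v''$, with both points lying simultaneously on $\Gamma_{u,v}$ and on $\Gamma_{u',v'}$.

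Next I would show the two inclusions. For $\supseteq$: since $u'',v''\in\Gamma_{u,v}$ with $u''\leq v''$, the preceding observation gives $\Gamma_{u'',v''}\subset\Gamma_{u,v}$, and analogously $\Gamma_{u'',v''}\subset\Gamma_{u',v'}$, hence $\Gamma_{u'',v''}\subset\Gamma_{u,v}\cap\Gamma_{u',v'}$. For $\subseteq$: any $w\in\Gamma_{u,v}\cap\Gamma_{u',v'}$ satisfies $u''\leq w\leq v''$ by the definition of $u'',v''$; viewing $w$ as lying on $\Gamma_{u,v}$, the sub-path of $\Gamma_{u,v}$ between $u''$ and $v''$ equals $\Gamma_{u'',v''}$, so $w\in\Gamma_{u'',v''}$.

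There is no real obstacle here; the argument is almost immediate from the preceding paragraph about sub-paths being geodesics. The only small point worth being careful about is the implicit use of almost-sure uniqueness of geodesics (so that $\Gamma_{u'',v''}$ is well-defined and equal to the relevant sub-path of each of $\Gamma_{u,v}$ and $\Gamma_{u',v'}$), but this is already built into the notational setup. I would keep the write-up to a few sentences, matching the weight of the statement.
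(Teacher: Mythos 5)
Your proof is correct and follows essentially the same reasoning the paper has in mind: the paper states the lemma as an immediate consequence of the preceding observation that sub-paths of geodesics between comparable points are themselves geodesics, and your argument makes this explicit by taking $u'',v''$ to be the $\leq$-minimum and $\leq$-maximum of the (totally ordered, finite) intersection and applying that observation in both directions.
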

In particular, this implies that for $u,v<w\in\Z^2$, the geodesics $\Gamma_{u,w}$ and $\Gamma_{v,w}$ follow the same path from $\fC^{u, v; w}$ to $w$.

We define another (partial) order of points in $\Z^2$:
for $u=(u_1,u_2), v=(v_1,v_2)\in\Z^2$, we denote $u\preceq v$, if $u_1\leq v_1$ and $u_2\geq v_2$, and we denote $u\prec v$ if $u\preceq v$ and $u\neq v$.
From Lemma \ref{l:intersec} we get the following result which says that geodesics are ordered.
\begin{lemma}  \label{l:ordered}
Suppose $u, u', v, v' \in \Z^2$ satisfy that $u\leq v$, $u'\leq v'$, and $u\preceq u'$, $v\preceq v'$.
Then for any $w\in\Gamma_{u,v}$ and $w'\in\Gamma_{u',v'}$, we cannot have $w'\prec w$.
\end{lemma}
The next (less elementary) result is an estimate about spatial transversal fluctuation of geodesics, and will be repeatedly used in the rest of this note.
It can be thought of as \cite[Theorem 3]{basu2017coalescence} in a slightly different form.
\begin{proposition}  \label{prop:spaflu}
There are absolute constants $c, r_0 \in \R_+$ such that the following is true. 
Let $n,r\in\Z_+$ with $r_0<r<n$, and $m \in \Z$ with $|m|<10r^{2/3}$.
Take $f_0 \in \Z$ such that $(r+f_0, r-f_0) = \Gamma_{(m,-m), \bn} \cap \LL_{2r}$, then we have $\PP[|f_0| > xr^{2/3}] < e^{-cx}$ for any large enough $x$.
\end{proposition}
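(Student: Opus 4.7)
The plan is to exploit the fact that $f_0$ is the unique $f \in \Z$ maximizing the random functional
\[
F(f) := T_{(m, -m), (r+f, r-f)} + T_{(r+f, r-f), \bn} - \xi_{(r+f, r-f)},
\]
since exactly one vertex of $\LL_{2r}$ lies on $\Gamma_{(m,-m), \bn}$, namely the one achieving this maximum. Consequently, $\{|f_0| > x r^{2/3}\}$ forces $F(f) \geq F(0)$ for some $|f| > x r^{2/3}$, so by a union bound it suffices to control $\PP[F(f) \geq F(0)]$ at each individual $f$.

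First I would apply the shape asymptotic $\E[T_{\boo, (a,b)}] = (\sqrt{a} + \sqrt{b})^2 + O((a+b)^{1/3})$ and Taylor-expand, obtaining
\[
\E[F(0) - F(f)] \,=\, \frac{f(f - 2m)}{r} + \frac{f^2}{n - r} + O(n^{1/3}).
\]
For $|m| < 10 r^{2/3}$ and $|f| > x r^{2/3}$ with $x$ larger than a universal constant, the leading term $f^2/r$ dominates and this mean gap is at least $c x^2 r^{1/3}$. I would then bound $\PP[F(f) \geq F(0)]$ by splitting the gap across the four passage times in $F(f) - F(0)$ and invoking the one-point moderate deviation estimates for exponential LPP from \cite{johansson2000shape, LR10}, namely the upper tail $\PP[T > \mu + t N^{1/3}] \leq e^{-c t^{3/2}}$ and the lower tail $\PP[T < \mu - t N^{1/3}] \leq e^{-c t^3}$. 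Since the geometric constraint $|f| \leq r + |m|$ restricts to at most $O(r)$ viable values of $f$, a union bound then delivers $\PP[|f_0| > x r^{2/3}] \leq e^{-cx}$.

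The main obstacle is the regime $n \gg r$: the passage times ending at $\bn$ have native fluctuation scale $(n-r)^{1/3}$, potentially dwarfing the gap $x^2 r^{1/3}$ and rendering the corresponding tail bounds vacuous. The standard remedy is a restart argument. I would first bootstrap a weak version of the transversal fluctuation bound at the intermediate level $\LL_{4r}$, showing that $\Gamma_{(m,-m), \bn} \cap \LL_{4r}$ sits within transversal distance $O(r^{2/3})$ with probability $1 - e^{-c}$. Conditional on this intersection point $y$, the sub-geodesic from $(m,-m)$ to $y$ coincides with $\Gamma_{(m,-m), y}$, and the original question reduces to the transversal fluctuation of a geodesic whose endpoint is at comparable distance $O(r)$ from the start --- precisely the regime handled by the previous step. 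This two-step scheme mirrors the proof of \cite[Theorem~3]{basu2017coalescence}, of which the proposition is said to be a slight variant.
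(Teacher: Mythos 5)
Your proposal identifies the right difficulty (the $(n-r)^{1/3}$ fluctuation scale when $n\gg r$), but the proposed remedies contain two genuine gaps.

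\textbf{Gap 1: the union bound over individual $f$ is too lossy.}
You union over the $O(r)$ lattice points $f$ with $|f|>xr^{2/3}$. Setting $y=|f|/r^{2/3}$, the one-point moderate deviation estimates give a per-point contribution of order $e^{-cy^3}$ (the best of the upper/lower tails), and since there are $\approx r^{2/3}$ points per unit increment of $y$ the total is $\approx r^{2/3}e^{-cx^3}$. This is \emph{not} $\leq e^{-cx}$ when $x$ is a fixed absolute constant and $r\to\infty$: one would need $x\gtrsim(\log r)^{1/3}$. But the paper needs the proposition for $x$ an absolute constant (it is used, e.g., with $x=p$ a constant in the proof of Proposition~\ref{prop:lowerbound}). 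The paper's proof avoids this by invoking the \emph{segment-to-segment} supremum estimate Proposition~\ref{t:treetilted}, which bundles $f$'s into $O(r^{1/3})$ segments of length $r^{2/3}$ with an exponential tail, so the sum over segments converges to $e^{-cx}$ with no $r$-dependent loss.

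\textbf{Gap 2: the restart at $\LL_{4r}$ either loses the $x$-dependence or is circular.}
You propose showing $\Gamma\cap\LL_{4r}$ lies within $O(r^{2/3})$ of the diagonal with probability $1-e^{-c}$, a constant. But then $\PP[|f_0|>xr^{2/3}]\leq e^{-c}+e^{-cx}$, which is eventually just $e^{-c}$ — not $e^{-cx}$. To keep the $x$-dependence the ``off-scale'' probability at $\LL_{4r}$ must itself decay like $e^{-cx}$, and that is precisely the proposition at scale $4r$; if $n/(4r)$ is still large you are back where you started. The paper sidesteps this with a dyadic multi-scale scheme carrying \emph{two} essential features that a single restart lacks: (a) the displacement threshold at scale $\LL_{2^jr}$ grows geometrically, as $x((2\alpha)^jr)^{2/3}$ with $\alpha=2^{1/6}$, so the per-scale probabilities $e^{-cx\alpha^{2j/3}}$ are summable in $j$ and the total is still $e^{-cx}$; and (b) the bad event at scale $j$ is expressed purely via $T_{\boo,u}+T_{u,v}-T_{\boo,v}\geq 0$ with $u\in\LL_{2^jr}$, $v\in\LL_{2^{j+1}r}$, exploiting that the sub-path of $\Gamma_{\boo,\bn}$ up to $\LL_{2^{j+1}r}$ is itself a geodesic. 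This completely eliminates passage times ending at $\bn$, so the aspect-ratio problem never arises and no conditioning on an intermediate intersection point (which would in any case not decouple the weights) is needed.

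The shape-theorem Taylor expansion you carry out is correct, and the recognition that the aspect ratio is the obstruction is on target; but without the segment-level suprema and a genuine multi-scale iteration with a growing threshold the argument does not close.
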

The proof of this proposition is similar to that of \cite[Theorem 3]{basu2017coalescence}, 
and the arguments could be traced back to \cite{newman1995surface} in the setting of first passage percolation.
We leave the proof to the appendix.

\subsection{Proof of the main results: rotation and translation}
Exploiting rotation invariance of this model, we convert the probability considered in Theorem \ref{thm:main} to the probability of coalescence of two parallel geodesics.
\begin{lemma}   \label{lem:redtoparall}
Take any $k, n\in \Z_+$, $R > 10$, with $n > Rk$.
We have
\begin{equation}  \label{eq:redtoparall}
\frac{1}{2}
\leq \frac{\PP[ d(\fC^{\bbk, -\bbk; \bn} )> Rk]}{ 
\PP [ \Gamma_{-\bbk, \bn} \cap \LL_{\lfloor Rk \rfloor} \neq \Gamma_{\bbk, \bn + 2\bbk } \cap \LL_{\lfloor Rk \rfloor} ]}
\leq 1
\end{equation}
\end{lemma}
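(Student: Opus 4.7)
The plan is to augment the two geodesics in the denominator of \eqref{eq:redtoparall} with a third geodesic, $\Gamma_{\bbk, \bn}$, and read off the relevant events from the three intersections with $\LL_{\lfloor Rk \rfloor}$. Set $p_1 := \Gamma_{-\bbk, \bn} \cap \LL_{\lfloor Rk \rfloor}$, $q := \Gamma_{\bbk, \bn} \cap \LL_{\lfloor Rk \rfloor}$, and $p_2 := \Gamma_{\bbk, \bn + 2\bbk} \cap \LL_{\lfloor Rk \rfloor}$. Because two geodesics sharing the endpoint $\bn$ coincide from $\fC^{\bbk, -\bbk; \bn}$ onwards (Lemma \ref{l:intersec} and the remark following it), the event $A := \{p_1 \neq q\}$ is exactly $\{d(\fC^{\bbk, -\bbk; \bn}) > Rk\}$; denote also $B := \{p_1 \neq p_2\}$ (the denominator event) and $A' := \{q \neq p_2\}$. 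Since $-\bbk \preceq \bbk$ and $\bn \preceq \bn + 2\bbk$, Lemma \ref{l:ordered} forces the ordering $p_1 \preceq q \preceq p_2$ along $\LL_{\lfloor Rk \rfloor}$, which immediately gives $A \subseteq B$ (hence $\PP[A] \leq \PP[B]$) and $B \subseteq A \cup A'$ (hence $\PP[B] \leq \PP[A] + \PP[A']$).

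It remains to show $\PP[A'] \leq \PP[A]$, which is where the symmetries of the weight field enter. Since $A'$ concerns two geodesics with common start point $\bbk$, first translate the weight field by $-\bbk$: this preserves the joint distribution and, as $d(\bbk) = 0$, fixes the line $\LL_{\lfloor Rk \rfloor}$, reducing $A'$ to the analogous event for $\Gamma_{\boo, \bn - \bbk}$ and $\Gamma_{\boo, \bn + \bbk}$. Next apply the $\pi$-rotation $\phi(v) := \bn - v$; this again preserves the distribution of the weight field, and, by reversing the orientation of paths, carries a geodesic $\Gamma_{u,w}$ to a path distributed like $\Gamma_{\phi(w), \phi(u)}$, while sending $\LL_{\lfloor Rk \rfloor}$ to $\LL_{2n - \lfloor Rk \rfloor}$. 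Thus the pair $(\Gamma_{\boo, \bn - \bbk}, \Gamma_{\boo, \bn + \bbk})$ is mapped to $(\Gamma_{\bbk, \bn}, \Gamma_{-\bbk, \bn})$, and Lemma \ref{l:intersec} once more gives $\PP[A'] = \PP[d(\fC^{\bbk, -\bbk; \bn}) > 2n - \lfloor Rk \rfloor] \leq \PP[A]$, the inequality using $2n - \lfloor Rk \rfloor \geq \lfloor Rk \rfloor$, which follows from $n > Rk$. Combining with the preceding paragraph yields $\PP[B] \leq 2 \PP[A]$.

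No step is particularly delicate; the proof is essentially a bookkeeping argument combining the two previously recorded geometric lemmas with the translation and $\pi$-rotation invariances of the i.i.d.\ weight field. The conceptually interesting ingredient is the choice of the parallel partner $\Gamma_{\bbk, \bn + 2\bbk}$ in the denominator together with the intermediate geodesic $\Gamma_{\bbk, \bn}$: this pairing is precisely what makes the events $A$ and $A'$ rotation-conjugate, so that the crude union bound $B \subseteq A \cup A'$ loses only the harmless factor $\frac{1}{2}$ appearing in \eqref{eq:redtoparall}.
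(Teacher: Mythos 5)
Your proof is correct and follows essentially the same route as the paper: your events $A$, $A'$, $B$ coincide with the paper's $\cE_1$, $\cE_2$, $\cE_3$, the ordering argument and the inclusion $B \subseteq A \cup A'$ match, and your translation-by-$-\bbk$ followed by the $\pi$-rotation $v \mapsto \bn - v$ composes to exactly the rotation about $\tfrac{\bbk + \bn}{2}$ used in the paper (the paper rotates $\cE_1$ forward to dominate $\cE_2$, whereas you rotate $A'$ backward into a sub-event of $A$, but these are the same computation read in opposite directions).
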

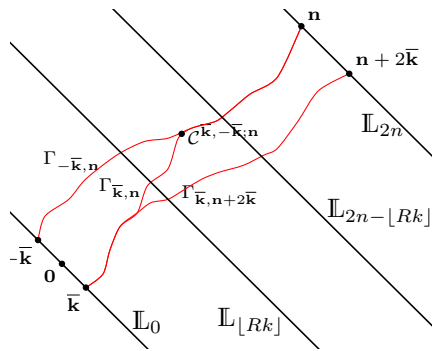
\begin{figure}[htbp]
    \centering
\begin{tikzpicture}[line cap=round,line join=round,>=triangle 45,x=4.5cm,y=4.5cm]
\clip(0.2,0.1) rectangle (1.43,1.1);
\begin{scriptsize}
\draw (0.35,0.35) node[anchor=north east]{$\boo$};
\draw (0.42,0.28) node[anchor=north east]{$\bbk$};
\draw (0.28,0.42) node[anchor=north east]{$-\bbk$};
\draw (0.7,0.733) node[anchor=west]{$\fC^{\bbk,-\bbk;\bn}$};
\draw (0.6,0.62) node[anchor=north east]{$\Gamma_{\bbk,\bn}$};
\draw (0.48,0.7) node[anchor=north east]{$\Gamma_{-\bbk,\bn}$};
\draw (0.94,0.6) node[anchor=north east]{$\Gamma_{\bbk,\bn+2\bbk}$};
\draw (1.05,1.05) node[anchor=south west]{$\bn$};
\draw (1.19,0.91) node[anchor=south west]{$\bn+2\bbk$};
\end{scriptsize}
\draw (0.53,0.18) node[anchor=west]{$\LL_0$};
\draw (0.78,0.18) node[anchor=west]{$\LL_{\lfloor Rk \rfloor}$};
\draw (1.2,0.76) node[anchor=west]{$\LL_{2n}$};
\draw (1.1,0.5) node[anchor=west]{$\LL_{2n-\lfloor Rk \rfloor}$};

\draw [red] plot [smooth] coordinates {(1.05,1.05) (0.98,0.90) (0.9,0.85) (0.84,0.8) (0.8,0.77) (0.73,0.75) (0.66,0.72) (0.59,0.71) (0.51,0.67) (0.42, 0.6) (0.37,0.54) (0.3,0.49) (0.28,0.42)};
\draw [red] plot [smooth] coordinates {(1.05,1.05) (0.98,0.90) (0.9,0.85) (0.84,0.8) (0.8,0.77) (0.73,0.75) (0.69, 0.72) (0.66,0.63) (0.6,0.58) (0.57,0.50) (0.52, 0.46) (0.49,0.39) (0.47,0.32) (0.42,0.28)};
\draw [red] plot [smooth] coordinates {(1.19,0.91) (1.1,0.86) (1.04,0.77) (0.98,0.69) (0.94,0.67) (0.88,0.64) (0.81, 0.62) (0.76,0.6) (0.71,0.57) (0.66,0.54) (0.61,0.53) (0.57,0.50) (0.52, 0.46) (0.49,0.39) (0.47,0.32) (0.42,0.28)};
\draw [line width=.6pt] (-0.05,0.75) -- (0.75,-0.05);
\draw [line width=.6pt] (0.0,2.1) -- (2.1,0.0);
\draw [line width=.6pt] (0.0,1.2) -- (1.2,0.0);
\draw [line width=.6pt] (0.0,1.6) -- (1.6,0.0);
\draw [fill=black] (0.35,0.35) circle (1.0pt);
\draw [fill=black] (0.28,0.42) circle (1.0pt);
\draw [fill=black] (0.42,0.28) circle (1.0pt);
\draw [fill=black] (1.05,1.05) circle (1.0pt);
\draw [fill=black] (1.19,0.91) circle (1.0pt);
\draw [fill=black] (0.7,0.733) circle (1.0pt);
\end{tikzpicture}
\caption{An illustration of the objects in the proof of Lemma \ref{lem:redtoparall}.}
\end{figure}
\begin{proof}
Denote $\cE_1$ to be the event where
$\Gamma_{\bbk, \bn} \cap \LL_{\lfloor Rk \rfloor} \neq \Gamma_{-\bbk, \bn} \cap \LL_{\lfloor Rk \rfloor}$.
Then we note that the event $d(\fC^{\bbk, -\bbk; \bn} )> Rk$ is equivalent to $\cE_1$, by Lemma \ref{l:intersec}.
Also, denote $\cE_2$ to be the event where
\[
\Gamma_{\bbk, \bn} \cap \LL_{\lfloor Rk \rfloor} \neq \Gamma_{\bbk, \bn + 2\bbk} \cap \LL_{\lfloor Rk \rfloor},
\]
and $\cE_3$ to be the event where
\[
\Gamma_{-\bbk, \bn} \cap \LL_{\lfloor Rk \rfloor} \neq \Gamma_{\bbk, \bn + 2\bbk} \cap \LL_{\lfloor Rk \rfloor} .
\]
Now \eqref{eq:redtoparall} is equivalent to $\frac{1}{2}\PP[\cE_3]\leq \PP[\cE_1] \leq \PP[\cE_3]$.

We define $\cE_1'$ as $\cE_1$ rotated by $\pi$ around $\frac{\bbk+\bn}{2}$, i.e.
\[
\Gamma_{\bbk, \bn} \cap \LL_{2n - \lfloor Rk \rfloor} \neq \Gamma_{\bbk, \bn + 2\bbk} \cap \LL_{2n - \lfloor Rk \rfloor}.
\]
We have $\PP[\cE_1']=\PP[\cE_1]$ by rotation invariance of this model.
Note that both $\cE_2$ and $\cE_1'$ are about the geodesics $\Gamma_{\bbk, \bn}, \Gamma_{\bbk, \bn + 2\bbk}$.
Since $n > Rk$, we have $2n - \lfloor Rk \rfloor > \lfloor Rk \rfloor$, so $\cE_2$ implies $\cE_1'$ by Lemma  \ref{l:intersec}, and $\PP[\cE_2]\leq \PP[\cE_1']$.
By Lemma \ref{l:ordered} we have
$\Gamma_{-\bbk, \bn} \cap \LL_{\lfloor Rk \rfloor} \preceq \Gamma_{\bbk, \bn} \cap \LL_{\lfloor Rk \rfloor} \preceq \Gamma_{\bbk, \bn + 2\bbk} \cap \LL_{\lfloor Rk \rfloor}$, so $\cE_3=\cE_1 \cup \cE_2$.
Thus
\[
\PP[\cE_3] \leq \PP[\cE_1] + \PP[\cE_2] \leq \PP[\cE_1] + \PP[\cE_1'] = 2\PP[\cE_1] \leq 2\PP[\cE_3],
\]
and our conclusion follows.
\end{proof}
Our next step is to study coalescence of two parallel geodesics.
For this, we consider a family of parallel geodesics as follows.

Take $a, b \in \Z$ and $m, d, s \in \Z_+$.
We define two sequences of points: $u_0, u_1, \cdots, u_m \in \LL_0$ and $v_0, v_1, \cdots, v_m \in \LL_s$, where $u_i := (a+id, -a-id)$, and $v_i := (\lfloor s/2\rfloor+b+id, \lceil s/2\rceil-b-id)$
for each $1 \leq i \leq m$.
We take the family of geodesics $\{\Gamma_{u_i, v_i}\}_{i=1}^m$, and study the number of intersections of them with $\LL_r$, for some $0 < r < s$.
We will show that when $md$ is in the order of $r^{2/3}$, its expectation is lower and upper bounded by constants.
\begin{proposition}   \label{prop:lowerbound}
There exist constants $M, r_0 \in \R_+$, such that if $md > Mr^{2/3}$, $r_0 < r < s$, and $|a|, |b| < r^{2/3}$, we have
\begin{equation}
\E | \LL_r \cap \cup_{i=1}^m \Gamma_{u_i, v_i} |  > \frac{3}{2},
\end{equation}
\end{proposition}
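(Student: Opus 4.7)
The plan is to bound $\E\bigl|\LL_r \cap \bigcup_{i=1}^m \Gamma_{u_i, v_i}\bigr|$ from below by comparing only the outermost two geodesics in the family, $\Gamma_{u_1, v_1}$ and $\Gamma_{u_m, v_m}$, and showing that once $md$ sufficiently dominates $r^{2/3}$ their intersections with $\LL_r$ are distinct with probability greater than $1/2$. Write $X_i$ for the first coordinate of $\Gamma_{u_i,v_i} \cap \LL_r$. The relations $u_i \preceq u_{i+1}$ and $v_i \preceq v_{i+1}$ (both shifts equal $(d,-d)$), combined with Lemma~\ref{l:ordered}, force $X_1 \leq X_2 \leq \cdots \leq X_m$, so
\[
\bigl| \LL_r \cap \bigcup_{i=1}^m \Gamma_{u_i, v_i} \bigr| \;\geq\; 1 + \mathbf{1}[X_m > X_1],
\]
and it suffices to prove $\PP[X_m > X_1] > 1/2$ once $M$ is chosen large enough.

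The two key ingredients are translation invariance and transversal fluctuation. By translation invariance of the i.i.d.\ weight field, $\Gamma_{u_m, v_m}$ has the same distribution as $\Gamma_{u_1, v_1}$ translated by $((m-1)d, -(m-1)d)$, so $Y := X_m - (m-1)d$ has the same distribution as $X_1$. Applying Proposition~\ref{prop:spaflu} to $\Gamma_{u_1, v_1}$ after translating $u_1$ to $\boo$---its translated endpoint $v_1 - u_1 = (\lfloor s/2 \rfloor + b - a, \lceil s/2 \rceil - b + a)$ lies within $O(r^{2/3})$ of the diagonal by the assumption $|a|, |b| < r^{2/3}$, so the proposition applies with its parameter ``$n$'' essentially $s/2$ and its ``$r$'' essentially $r/2$---yields a constant $C_0$ and a deterministic ``straight-line'' first coordinate $\mu$ such that $\PP[|X_1 - \mu| > C_0 r^{2/3}] < 1/8$. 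The same bound applies to $Y$ by equality of marginal distributions, so a union bound gives $|X_1 - \mu|, |Y - \mu| \leq C_0 r^{2/3}$ with probability greater than $3/4$. On this event,
\[
X_m - X_1 \;=\; (m-1)d + (Y - X_1) \;\geq\; (m-1)d - 2 C_0 r^{2/3}.
\]
For $m \geq 2$ one has $(m-1)d \geq md/2 > (M/2)\,r^{2/3}$; choosing $M \geq 4 C_0 + 2$ then makes the right-hand side positive, so $\PP[X_m > X_1] > 3/4$ and hence $\E\bigl|\LL_r \cap \bigcup_{i=1}^m \Gamma_{u_i, v_i}\bigr| > 1 + 3/4 > 3/2$.

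The principal technical obstacle is the precise adaptation of Proposition~\ref{prop:spaflu}, which is stated for geodesics from $(m, -m)$ to $\bn = (n,n)$, to the geodesic $\Gamma_{u_1, v_1}$ whose translated endpoints lie on $\LL_0$ and $\LL_s$ with $v_1 - u_1$ deviating from $(\lfloor s/2 \rfloor, \lceil s/2 \rceil)$ by the $O(r^{2/3})$ quantity $b - a$. This deviation, together with the at-most-$1$ parity discrepancy of $s$, is harmless: one may invoke Proposition~\ref{prop:spaflu} after absorbing the offset into its ``$m$'' parameter (which then satisfies $|m| \leq |a| + |b| + 1 < 3 r^{2/3} < 10 (r/2)^{2/3}$, meeting its hypothesis), or equivalently observe that the Newman-type curvature proof behind Proposition~\ref{prop:spaflu} carries through for endpoints this close to the diagonal. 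A minor secondary point is that the argument needs $m \geq 2$ to turn $md > M r^{2/3}$ into $(m-1)d > (M/2)\,r^{2/3}$; this is automatic in the intended application, where $m$ is of order $R^{2/3}$.
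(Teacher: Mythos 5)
Your proof is correct and takes essentially the same route as the paper: reduce to the two outermost geodesics, note that translation invariance makes one a shifted copy of the other, invoke the transversal fluctuation bound (Proposition~\ref{prop:spaflu}) to show each intersection with $\LL_r$ stays within $O(r^{2/3})$ of its deterministic center, and then choose $M$ large enough that the centers are separated by more than twice that scale. The only cosmetic difference is that the paper's proof works with indices $0$ and $m$ (giving separation $md$ directly), whereas you work with $1$ and $m$ and absorb the resulting $(m-1)d$ via the harmless $m\geq 2$ observation; your indexing actually matches the statement more faithfully than the paper's own proof, which appears to have a minor index mismatch.
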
 
\begin{proposition}  \label{prop:upperbound}
There exist constants $C, r_0 \in \R_+$, such that if $r > r_0$, $s > \frac{3r}{2}$, and $md, |a|, |b| < r^{2/3}$, we have
\begin{equation}
\E | \LL_r \cap \cup_{i=0}^m \Gamma_{u_i, v_i} |  < C.
\end{equation}
\end{proposition}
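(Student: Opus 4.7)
The plan is to leverage the ordering of the parallel geodesics $\{\Gamma_{u_i, v_i}\}$ via Lemma~\ref{l:ordered}, together with the antidiagonal translation invariance of the model, in order to reduce the expected count to a single per-pair coalescence probability. Since $u_i \preceq u_{i+1}$ and $v_i \preceq v_{i+1}$ for every $i$, writing $\Gamma_{u_i, v_i} \cap \LL_r = (r+f_i, r-f_i)$ one obtains the ordering $f_0 \leq f_1 \leq \cdots \leq f_m$, so
\[
|\LL_r \cap \cup_{i=0}^m \Gamma_{u_i, v_i}| \;=\; 1 + \sum_{i=0}^{m-1} \mathbf{1}[f_i < f_{i+1}].
\]

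Next, the antidiagonal shift by $(d,-d)$ preserves every $\LL_j$ (since $d((d,-d))=0$) and maps $(u_i,v_i) \mapsto (u_{i+1},v_{i+1})$. Translation invariance of the i.i.d.\ environment therefore gives a common value $p := \PP[f_i < f_{i+1}]$, independent of $i$, so
\[
\E|\LL_r \cap \cup_{i=0}^m \Gamma_{u_i, v_i}| = 1 + mp.
\]
Because $md < r^{2/3}$, it suffices to prove the per-pair coalescence bound $p \leq C d / r^{2/3}$.

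To establish this per-pair bound, my plan is to first apply Proposition~\ref{prop:spaflu} to both $\Gamma_{u_0,v_0}$ and $\Gamma_{u_1,v_1}$, localizing each $\LL_r$-crossing to an $O(r^{2/3})$-window about its predicted location up to exponentially small probability. Inside this window, I would then invoke the tree-branching and BK-type passage-time estimates from \cite{basu2018nonexistence, basu2014last} to bound the probability that the two crossings disagree. The hypothesis $s > \tfrac{3r}{2}$ is used to ensure that $\LL_r$ sits well in the interior of both geodesics, so the endpoint level $\LL_s$ does not distort these estimates.

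The main obstacle will be precisely this per-pair coalescence bound. Its conclusion is of exactly the same flavor as Theorem~\ref{thm:main}, so one must avoid circular reliance on that theorem and instead apply the tree-branching and BK estimates directly to the two-endpoint finite geodesic setting. A secondary subtlety is that the inputs in \cite{basu2018nonexistence, basu2014last} are typically phrased for one-endpoint geodesic trees (rooted at a common distant point), and some adaptation is required to fit the present configuration of a pair of finite geodesics related by the minimal antidiagonal shift $(d,-d)$.
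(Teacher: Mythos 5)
Your reduction $\E|\LL_r \cap \cup_{i=0}^m \Gamma_{u_i, v_i}| = 1 + mp$, where $p = \PP[f_i < f_{i+1}]$ is $i$-independent by the antidiagonal translation invariance, is correct; it is exactly the identity the paper writes in the proof of Theorem~\ref{thm:main}. But the paper uses that identity \emph{after} establishing the proposition, to deduce the per-pair bound $p \leq C/m$ \emph{from} the expectation bound. Your plan goes the other way: prove $p \leq Cd/r^{2/3}$ first and then multiply by $m$. That is precisely the circular step you flag, and your proposed escape (``apply the tree-branching and BK estimates directly to the two-endpoint finite geodesic setting'') does not say where the crucial factor $d/r^{2/3}$ would come from. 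Note that $p$ is of constant order once $d \asymp r^{2/3}$ (this is Proposition~\ref{prop:lowerbound}), so a direct argument would have to produce linear improvement in $d$, whereas the disjoint-geodesics estimate Proposition~\ref{prop:numnoncoalgeo} gives decay only in the \emph{number} of mutually disjoint geodesics, not in any separation scale between two of them. Applied to $\ell = 2$, it gives only a constant bound on the probability of two disjoint geodesics, which is useless for your per-pair bound.

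The paper's proof of Proposition~\ref{prop:upperbound} avoids this by bounding the expectation directly through its tails: for each $\tau$ it bounds $\PP[|\LL_r \cap \cup_{i=0}^m \Gamma_{u_i, v_i}| > \tau]$. If $\tau$ distinct intersection points occur, then (splitting each geodesic at $\LL_r$ into a left half $\Gamma_{u_i, w_i}$ and a right half $\Gamma_{w_i, v_i}$, and using Lemma~\ref{l:intersec}) at least $\tau/2$ of the left halves or at least $\tau/2$ of the right halves must be mutually disjoint. Proposition~\ref{prop:numnoncoalgeo} then gives a probability of order $e^{-c\tau^{1/8}}$ for either of these, once $w_0$ and $w_m$ have been localized to a window of width $O(\tau^{1/16} r^{2/3})$ using Proposition~\ref{prop:spaflu} (this is where $md < r^{2/3}$ and the reservoir $s > 3r/2$ enter). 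Summing the tail over $\tau$ gives a constant bound. The bound is uniform in $m$, and only afterwards does one divide by $m$ to extract the per-pair probability. Also, a minor point: Proposition~\ref{prop:numnoncoalgeo} is already stated for geodesics between distinct endpoint pairs on $\LL_0$ and $\LL_{2n}$, not only for geodesic trees rooted at a single distant vertex, so your concern about adapting one-endpoint results to the two-endpoint configuration is already resolved by the form of that proposition.
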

The proofs of these results are adapted from the proof \cite[Theorem 2]{basu2017coalescence}, and we leave them to the next subsection.
We now prove our main results assuming these two propositions.
\begin{proof}[Proof of Theorem \ref{thm:main}]
For the parameters in the setting of Proposition \ref{prop:lowerbound} and \ref{prop:upperbound}, we take $r = \lfloor Rk \rfloor$, $s = 2n$, $a = -\lfloor k^{2/3}\rfloor$, $b = 0$, $d = 2\lfloor k^{2/3}\rfloor$.
We leave $m \in \Z_+$ to be determined.
Then we have $u_0=-\bbk$, $u_1=\bbk$, $v_0=\bn$, and $v_1=\bn+2\bbk$.
In view of Lemma \ref{lem:redtoparall}, we just need to upper and lower bound $\PP[\Gamma_{u_0, v_0} \cap \LL_r \neq \Gamma_{u_1, v_1} \cap \LL_r ]$.

By letting $R$ large, we can make $r$ large, $|a|, |b| < r^{2/3}$, and $s > \frac{3r}{2}$ (since we require that $n > Rk$). 
By translation invariance, for each $0 \leq i \leq m-1$, we have
\[
\PP[\Gamma_{u_0, v_0} \cap \LL_r \neq \Gamma_{u_1, v_1} \cap \LL_r ] = \PP[\Gamma_{u_i, v_i} \cap \LL_r \neq \Gamma_{u_{i+1}, v_{i+1}} \cap \LL_r].
\]
This (with Lemma \ref{l:ordered}) implies that
\[
\begin{split}
\E | \LL_r\cap \cup_{i=0}^m \Gamma_{u_i, v_i} | & =
1 + \sum_{i=0}^{m-1}\PP[\Gamma_{u_i, v_i} \cap \LL_r \neq \Gamma_{u_{i+1}, v_{i+1}} \cap \LL_r]
\\
&=
1 +
m\PP[\Gamma_{u_0, v_0} \cap \LL_r \neq \Gamma_{u_1, v_1} \cap \LL_r ].
\end{split}
\]
First, we take $m = \lceil MR^{2/3} \rceil$, where $M$ is from Proposition \ref{prop:lowerbound}.
By taking $R$ large enough, we have $m < 2MR^{2/3}$, and $md \geq 2MR^{2/3}\lfloor k^{2/3}\rfloor > Mr^{2/3}$. Hence by Proposition \ref{prop:lowerbound} we have
\[
\PP[\Gamma_{u_0, v_0} \cap \LL_r \neq \Gamma_{u_1, v_1} \cap \LL_r ] > \frac{1}{2m} > (4M)^{-1}R^{-2/3}.
\]
Second, we take $m = \lfloor R^{2/3}/3 \rfloor$. 
Now we have $m > R^{2/3}/4$ and $md \leq 2(Rk)^{2/3}/3 < r^{2/3}$ when $R$ is large enough.
By Proposition \ref{prop:upperbound}, we have that
\[
\PP[\Gamma_{u_0, v_0} \cap \LL_r \neq \Gamma_{u_1, v_1} \cap \LL_r ] < \frac{C}{m} < 4CR^{-2/3}.
\]
Finally, putting these together and using Lemma \ref{lem:redtoparall}, we get
\[
(8M)^{-1}R^{-2/3} <
\PP[ d(\fC^{\bk^{(1)}, \bk^{(2)}; \bn} ) > Rk  ] 
 < 4CR^{-2/3},
\]
and our conclusion follows since $M$ and $C$ are constants.
\end{proof}
\begin{proof}[Proof of Corollary \ref{cor:bsssetting}]
For the upper bound, first note that $\fC^{\boo, \hbk; \bn} \leq \fC^{\bbk, -\bbk; \bn}$, which is due to $-\bbk\prec \boo, \hbk \prec \bbk$ and Lemma \ref{l:intersec}, \ref{l:ordered}.
Then applying Theorem \ref{thm:main} we have
\[
\PP[ \fC^{\boo, \hbk; \bn}_1> Rk]
\leq
\PP[d(\fC^{\boo, \hbk; \bn})> Rk ]
\leq
\PP[ d(\fC^{\bbk, -\bbk; \bn})> Rk ]
< C_2R^{-2/3}.
\]
Now we prove the lower bound.
Denote $\hbk':=(\lfloor k^{2/3} \rfloor, 0)$.
By symmetry of the model we have
\[
2\PP[ d(\fC^{\boo, \hbk; \bn})> 4Rk ] =
\PP[ d(\fC^{\boo, \hbk; \bn})> 4Rk ]
+
\PP[ d(\fC^{\boo, \hbk'; \bn})> 4Rk ]
\geq
\PP[ d(\fC^{\hbk, \hbk'; \bn})> 4Rk ],
\]
where for the second inequality we use that $d(\fC^{\hbk, \hbk'; \bn}) = \max \{d(\fC^{\boo, \hbk; \bn}), d(\fC^{\boo, \hbk'; \bn})\}$, which is due to Lemma \ref{l:intersec}.
By Theorem \ref{thm:main}, we have that
\[
\PP[ d(\fC^{\hbk, \hbk'; \bn})> 4Rk ] > C_1R^{-2/3}/8.
\]
Next, by Proposition \ref{prop:spaflu} we have
\[
\PP[ d(\fC^{\boo, \hbk; \bn})> 4Rk, \fC^{\boo, \hbk; \bn}_1\leq Rk ]
\leq
\PP[f_{0} < -Rk] < e^{-c R^{2/3}k^{2/3}},
\]
where $f_{0} \in \Z$ such that $(\lceil 2Rk\rceil + f_0, \lceil 2Rk\rceil - f_0) \in \Gamma_{\boo, \bn}$, and $c>0$ is an absolute constant.
Thus we have
\[
\begin{split}
\PP[ \fC^{\boo, \hbk; \bn}_1> Rk ] &
\geq
\PP[ d(\fC^{\boo, \hbk; \bn} )> 4Rk ]
-
\PP[ d(\fC^{\boo, \hbk; \bn})> 4Rk, \fC^{\boo, \hbk; \bn}_1\leq Rk ]
\\ &
\geq C_1R^{-2/3}/16 - e^{-c R^{2/3}k^{2/3}}.
\end{split}
\]
Then by taking $R_0$ large enough the lower bound follows.
\end{proof}

\subsection{Intersections with parallel geodesics}
In this subsection we prove Proposition \ref{prop:lowerbound} and \ref{prop:upperbound}, adapting the arguments in \cite[Section VI]{basu2017coalescence}.
We start with the lower bound, which follows from the spatial transversal fluctuation (Proposition \ref{prop:spaflu}).
\begin{proof}[Proof of Proposition \ref{prop:lowerbound}]
It suffices to consider two geodesics $\Gamma_{u_0, v_0}$ and $\Gamma_{u_m, v_m}$.

Take $f_0, f_m \in \Z$ such that $(\lfloor \frac{r}{2}\rfloor+f_0, \lceil \frac{r}{2}\rceil-f_0) = \Gamma_{u_0, v_0} \cap \LL_r$, and $(\lfloor \frac{r}{2}\rfloor+f_m, \lceil \frac{r}{2}\rceil-f_m) = \Gamma_{u_r, v_r} \cap \LL_r$.
By Proposition \ref{prop:spaflu}, and translation invariance, we can find constant $p\in\R_+$ such that 
\[
\PP[|f_0|>pr^{2/3}] ,\; \PP[|f_m - md|>pr^{2/3}] < \frac{1}{4}.
\]
By taking $M > 2p$, we have $md > Mr^{2/3} > 2pr^{2/3}$, and $\PP[f_0 = f_m] < \frac{1}{2}$.
Then
\[
\E| \LL_r \cap \cup_{i=1}^m \Gamma_{u_i, v_i} |  \geq \E| \LL_r \cap \left(\Gamma_{u_0, v_0} \cup \Gamma_{u_m, v_m} \right) |
\geq
1 + \PP[f_0\neq f_m] > \frac{3}{2},
\]
and our conclusion follows.
\end{proof}
\begin{figure}[htbp]
    \centering
\begin{tikzpicture}[line cap=round,line join=round,>=triangle 45,x=3cm,y=5.5cm]
\clip(-1.2,-0.1) rectangle (2.0,0.55);
\begin{scriptsize}
\draw (-0.2,0.0) node[anchor=north]{$u_0$};
\draw (0.0,0.0) node[anchor=north]{$u_1$};
\draw (0.2,0.0) node[anchor=north]{$u_2$};
\draw (0.4,0.0) node[anchor=north]{$u_3$};
\draw (0.6,0.0) node[anchor=north]{$u_4$};
\draw (0.8,0.0) node[anchor=north]{$u_5$};
\draw (1.0,0.0) node[anchor=north]{$u_6$};
\draw (-0.2,0.5) node[anchor=south]{$v_0$};
\draw (0.0,0.5) node[anchor=south]{$v_1$};
\draw (0.2,0.5) node[anchor=south]{$v_2$};
\draw (0.4,0.5) node[anchor=south]{$v_3$};
\draw (0.6,0.5) node[anchor=south]{$v_4$};
\draw (0.8,0.5) node[anchor=south]{$v_5$};
\draw (1.0,0.5) node[anchor=south]{$v_6$};
\draw (-0.15,0.17) node[anchor=south east]{$w_0$};
\draw (0.0244,0.17) node[anchor=south]{$w_1/w_2$};
\draw (0.383,0.17) node[anchor=south]{$w_3$};
\draw (0.436,0.17) node[anchor=south west]{$w_4$};
\draw (0.952,0.17) node[anchor=south west]{$w_5/w_6$};

\end{scriptsize}
\draw (1.8,0.0) node[anchor=north]{$\LL_0$};
\draw (1.8,0.17) node[anchor=north]{$\LL_{r}$};
\draw (1.8,0.5) node[anchor=north]{$\LL_{s}$};

\draw [red] plot [smooth] coordinates {(1.0,0.0) (0.96,0.05) (0.8,0.1) (0.92,0.15) (0.96,0.2) (0.77,0.27) (0.79,0.33) (0.85,0.37) (1.03,0.43)
(1.0,0.5)};
\draw [red] plot [smooth] coordinates {(0.8,0.0) (0.76,0.05) (0.78,0.1) (0.92,0.15) (0.96,0.2) (0.77,0.27) (0.7,0.35) (0.73,0.43)
(0.8,0.5)};
\draw [red] plot [smooth] coordinates {(0.6,0.0) (0.62,0.05) (0.57,0.1) (0.43,0.15) (0.46,0.2) (0.52,0.3) (0.67,0.35) (0.72,0.43)
(0.6,0.5)};
\draw [red] plot [smooth] coordinates {(0.4,0.0) (0.29,0.05) (0.25,0.1) (0.33,0.15) (0.45,0.2) (0.51,0.3) (0.44,0.35) (0.38,0.43)
(0.4,0.5)};
\draw [red] plot [smooth] coordinates {(0.2,0.0)
(0.25,0.1) (0.03,0.15) (0.03,0.2) (0.02,0.25) (0.11,0.3) (0.22,0.35) (0.27,0.43)
(0.2,0.5)};
\draw [red] plot [smooth] coordinates {(0.0,0.0)
(-0.05,0.1) (0.03,0.2) (-0.11,0.3) (-0.09,0.35) (0.04,0.43)
(0.0,0.5)};
\draw [red] plot [smooth] coordinates {(-0.2,0.0)
(-0.24,0.1) (-0.12,0.2) (-0.11,0.3) (-0.27,0.4)
(-0.2,0.5)};
\draw [line width=.6pt] (-10,0.0) -- (10,0.0);
\draw [line width=.6pt] (-10,0.5) -- (10,0.5);
\draw [line width=.6pt] (-10,0.17) -- (10,0.17);
\draw [fill=black] (-0.2,0.0) circle (1.0pt);
\draw [fill=black] (0.0,0.0) circle (1.0pt);
\draw [fill=black] (0.2,0.0) circle (1.0pt);
\draw [fill=black] (0.4,0.0) circle (1.0pt);
\draw [fill=black] (0.6,0.0) circle (1.0pt);
\draw [fill=black] (0.8,0.0) circle (1.0pt);
\draw [fill=black] (1.0,0.0) circle (1.0pt);
\draw [fill=black] (-0.2,0.5) circle (1.0pt);
\draw [fill=black] (0.0,0.5) circle (1.0pt);
\draw [fill=black] (0.2,0.5) circle (1.0pt);
\draw [fill=black] (0.4,0.5) circle (1.0pt);
\draw [fill=black] (0.6,0.5) circle (1.0pt);
\draw [fill=black] (0.8,0.5) circle (1.0pt);
\draw [fill=black] (1.0,0.5) circle (1.0pt);
\draw [fill=black] (-0.15,0.17) circle (1.0pt);
\draw [fill=black] (0.0244,0.17) circle (1.0pt);
\draw [fill=black] (0.383,0.17) circle (1.0pt);
\draw [fill=black] (0.436,0.17) circle (1.0pt);
\draw [fill=black] (0.952,0.17) circle (1.0pt);
\end{tikzpicture}
\caption{An illustration of the objects in the proof of Proposition \ref{prop:upperbound}, rotated by $\pi/4$.} \label{f:fig2}
\end{figure}
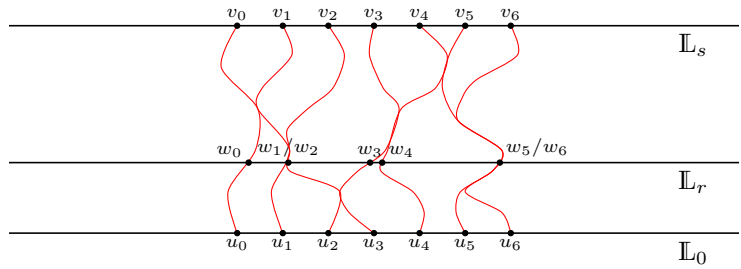
For the upper bound, in addition to Proposition \ref{prop:spaflu} we also need the following estimate on the number of disjoint geodesics.
\begin{proposition}[\protect{\cite[Corollary 2.7]{basu2018nonexistence}}] \label{prop:numnoncoalgeo}
There exists $n_0, \ell_0,c \in \R_+$ such that the following is true.
Take any $n, \ell \in \Z_+$, $n>n_0$, $n^{0.01}>\ell > \ell_0$.
Let $A_{\ell,n}:= \{ (i, -i): i \in \Z, |i| < \ell^{1/16} n^{2/3} \} \subset \LL_0$, and $B_{\ell,n}:= \{ (n+i, n-i): i \in \Z, |i| < \ell^{1/16} n^{2/3} \} \subset \LL_{2n}$.
Let $\cC_{\ell,n}$ be the event that there exists $\tu_1, \cdots, \tu_{\ell} \in A_{\ell,n}$ and $\tv_1, \cdots, \tv_{\ell} \in B_{\ell,n}$, such that the geodesics $\{\Gamma_{\tu_i, \tv_i}\}_{i=1}^{\ell}$ are mutually disjoint.
Then $\PP[\cC_{\ell,n}] < e^{-c\ell^{1/8}}$.
\end{proposition}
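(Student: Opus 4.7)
Since Proposition \ref{prop:numnoncoalgeo} is cited from \cite{basu2018nonexistence}, I sketch only the strategy one would adopt; the full argument draws on exact-solvable input particular to exponential LPP. The plan is to combine monotonicity of geodesics with sharp distributional control on multi-path last passage times coming from the RSK correspondence and the determinantal structure of the model.

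First, by Lemma \ref{l:ordered}, on the event $\cC_{\ell,n}$ one may, after rearrangement, assume that the endpoints realising $\cC_{\ell,n}$ are ordered: $\tu_1 \preceq \cdots \preceq \tu_\ell$ in $A_{\ell,n}$ and $\tv_1 \preceq \cdots \preceq \tv_\ell$ in $B_{\ell,n}$. The $\ell$-tuple $(\Gamma_{\tu_i,\tv_i})_{i=1}^\ell$ is then mutually disjoint, and so is a feasible configuration for the $\ell$-path last passage time
\[
T^{(\ell)}(\tu_1,\ldots,\tu_\ell;\tv_1,\ldots,\tv_\ell) := \max \sum_{i=1}^\ell T(\gamma_i),
\]
where the maximum is taken over $\ell$-tuples of mutually disjoint up-right paths $\gamma_i$ from $\tu_i$ to $\tv_i$. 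Hence $T^{(\ell)} \geq \sum_{i=1}^{\ell} T_{\tu_i,\tv_i}$ on this event.

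Next, the one-point lower-tail moderate deviation estimate of Ledoux--Rider \cite{LR10} yields that, with overwhelming probability (easily absorbing a union bound over endpoint pairs in the $O(\ell^{1/16} n^{2/3})$-wide windows $A_{\ell,n}, B_{\ell,n}$), every $T_{\tu_i,\tv_i}$ exceeds $4n - C \ell^{2/3} n^{1/3}$ for a suitable constant $C$. On this event, $\sum_{i} T_{\tu_i,\tv_i} \geq 4n\ell - C \ell^{5/3} n^{1/3}$. On the other hand, via the RSK correspondence, $T^{(\ell)}$ is distributionally related to the sum of the top $\ell$ eigenvalues of a suitable LUE matrix; this sum is concentrated around $4n\ell - c_* \ell^{5/3} n^{1/3}$ for an edge-scaling constant $c_* > C$, with stretched-exponential upper tail above this value. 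Comparing the two bounds produces the desired contradiction, bounding $\PP[\cC_{\ell,n}]$ by $e^{-c \ell^{1/8}}$.

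The main obstacle is the careful quantitative matching of constants between the two sides and translating edge-scaling asymptotics for the LUE top-$\ell$ sum into a usable tail estimate uniform in the regime $\ell_0 < \ell < n^{0.01}$. The exponent $1/8$ is not itself essential to the heuristic; it is the best decay compatible with the polynomial union bound over endpoint windows and the presently available quantitative control on the LUE spectrum near the soft edge. A sharper analysis via multi-point Airy-process asymptotics would likely improve it, but the bound as stated is more than sufficient for the application in Proposition \ref{prop:upperbound}.
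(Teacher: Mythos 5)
Your sketch follows a genuinely different route from the paper's. The paper's proof (reproduced from \cite{basu2018nonexistence}) is combinatorial/percolative: it places $h = \ell^{1/2}$ equally spaced anti-diagonal lines between $\LL_0$ and $\LL_{2n}$, uses Proposition \ref{prop:spaflu} to show that with probability $\geq 1 - e^{-c\ell^{1/8}}$ every geodesic between $A_{\ell,n}$ and $B_{\ell,n}$ crosses each of them inside a window of width $O(\ell^{1/8}n^{2/3})$, and uses Proposition \ref{t:treetilted} to show that each geodesic's weight is at least $4n - c_1\ell^{1/4}n^{1/3}$. Proposition \ref{p:multi} then bounds the probability of $\ell$ disjoint high-weight paths forced through a fixed choice of small sub-windows: by Lemma \ref{l:mean}, a path constrained through $h$ small sub-windows loses $\Omega(n^{1/3}/h^{1/3})$ weight on each of $h$ independent blocks, and a Bernstein-type bound gives per-path probability $e^{-c\ell^{1/4}}$ (Lemma \ref{l:thin1}); the BK inequality raises this to $e^{-c\ell^{5/4}}$ for $\ell$ disjoint paths through a fixed configuration, which absorbs the entropy $\ell^{\ell^{23/24+\epsilon}}$ of configurations. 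Nothing in this argument uses RSK, LUE, or multi-path last passage times.

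Your proposal instead invokes the multi-path last passage time $T^{(\ell)}$ and its identification with the sum of the top $\ell$ LUE eigenvalues, comparing it to $\sum_i T_{\tu_i,\tv_i}$. This watermelon-type route is a legitimate alternative in the field (it is close in spirit to \cite{hammond2020exponents} and \cite{BGHH}), but as presented it has real gaps. First, the RSK/LUE identity holds for the corner geometry, with the $\ell$ starting and ending points packed adjacent to $(0,0)$ and $(n,n)$; it does not directly apply to endpoints spread over windows of width $\ell^{1/16}n^{2/3}$, and naive monotonicity points the wrong way (the windowed multi-path maximum is, if anything, larger), so a further surgery or coupling argument would be needed. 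Second, a stretched-exponential upper tail estimate for the sum of the top $\ell$ LUE eigenvalues, uniform over $\ell < n^{0.01}$, is not supplied by the one-point result \cite{LR10} and would itself require substantial work; the paper's block/BK/entropy decomposition is precisely a self-contained way to obtain this kind of control. Third, the exponent arithmetic in your comparison is imprecise: with endpoint windows of width $\ell^{1/16}n^{2/3}$, the slope penalty per geodesic is $O(\ell^{1/8}n^{1/3})$, not $O(\ell^{2/3}n^{1/3})$, so $\sum_i T_{\tu_i,\tv_i} \geq 4n\ell - O(\ell^{9/8}n^{1/3})$ — good news, since $\ell^{9/8} \ll \ell^{5/3}$ would turn your delicate constant comparison $c_* > C$ into a clean comparison of exponents, but the bound as you wrote it would not close. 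In short, the heuristic is sound, but the proposal leaves the hardest step (a quantitative, window-uniform upper bound on the disjoint multi-path maximum) unaddressed, which is exactly what the paper's argument supplies.
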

In \cite{basu2018nonexistence}, this result is stated as a generalization of \cite[Theorem 2]{basu2018nonexistence}
(see also \cite[Theorem 3.2]{BGHH} for a refinement of the arguments).
For completeness we also contain a proof of this proposition in the appendix.

Now we finish the proof of the upper bound.
\begin{proof}[Proof of Proposition \ref{prop:upperbound}]
We can assume that $r>10^{1001}$, since otherwise the result follows by taking $C$ large enough.

Take an absolute constant $\tau_0 := \max\{10^{10}, 2\ell_0+1\}$, where $\ell_0$ is from Proposition \ref{prop:numnoncoalgeo}.
For any $\tau \in \Z_+$, $\tau_0<\tau < r^{0.001}$, we wish to bound $\PP[| \LL_r \cap \cup_{i=0}^m \Gamma_{u_i, v_i}| > \tau ]$.
We denote $w_i := \Gamma_{u_i, v_i} \cap \LL_r$ for each $i=0,\cdots, m$.
Then by Lemma \ref{l:ordered} we have $w_0\preceq \cdots \preceq w_m$.
For any $0 \leq i < j \leq m$, if $w_i \neq w_j$, then by Lemma \ref{l:intersec}
at least one of $\Gamma_{u_i, w_i} \cap \Gamma_{u_j, w_j} = \emptyset$ and $\Gamma_{w_i, v_i} \cap \Gamma_{w_j, v_j} = \emptyset$ happens (see Figure \ref{f:fig2} for an illustration).
Now we denote
\[
\begin{aligned}
I:&=\{i \in \{0, \cdots, m-1\}: w_i \neq w_{i+1}\}, \\
I_1:&=\{i \in \{0, \cdots, m-1\}: \Gamma_{u_i, w_i} \cap \Gamma_{u_{i+1}, w_{i+1}} = \emptyset \}, \\
I_2:&=\{i \in \{0, \cdots, m-1\}: \Gamma_{w_i, v_i} \cap \Gamma_{w_{i+1}, v_{i+1}} = \emptyset \}.
\end{aligned}
\]
From this definition we have that $I = I_1 \cup I_2$. We also have that $\{\Gamma_{u_i, w_i}\}_{i\in I_1}$ are mutually disjoint, and that $\{\Gamma_{w_i, v_i}\}_{i\in I_2}$ are mutually disjoint.

We let $f_0, f_m \in \Z$ such that $(\lfloor \frac{r}{2} \rfloor+f_0, \lceil \frac{r}{2} \rceil-f_0) = w_0$, and $(\lfloor \frac{r}{2} \rfloor+f_m, \lceil \frac{r}{2} \rceil-f_m) = w_m$.
Now suppose that $| \LL_r \cap \cup_{i=0}^m \Gamma_{u_i, v_i}| > \tau $, and $|f_0|, |f_m|<\lfloor \tau/2\rfloor^{1/16} \lfloor r/4\rfloor^{2/3}-1$.
Then $|I| \geq \tau$, so either $|I_1| \geq \tau/2$ or $|I_2| \geq \tau/2$.
Using the notations in Proposition \ref{prop:numnoncoalgeo}, 
these imply that either the event $\cC_{\lfloor \tau/2\rfloor, \lfloor r/2 \rfloor}$ happens, or the event $\cC_{\lfloor \tau/2\rfloor, \lfloor (s-r)/2 \rfloor}$ translated by $(\lceil r/2 \rceil, \lceil r/2 \rceil)$ happens (here we use that $\lfloor (s-r)/2 \rfloor \geq \lfloor r/4\rfloor$ since $s > 3r/2$). Thus $\PP[| \LL_r \cap \cup_{i=0}^m \Gamma_{u_i, v_i} | > \tau ]$ is bounded by
\[
\PP[\max\{|f_0|, |f_m|\}\geq\lfloor \tau/2\rfloor^{1/16} \lfloor r/4\rfloor^{2/3}-1]
+ \PP[\cC_{\lfloor \tau/2\rfloor, \lfloor r/2 \rfloor}] + \PP[\cC_{\lfloor \tau/2\rfloor, \lfloor (s-r)/2 \rfloor}].
\]
Note that $\ell_0 < \lfloor \tau/2\rfloor < \lfloor r/4\rfloor^{0.01} \leq \lfloor r/2 \rfloor^{0.01}, \lfloor (s-r)/2 \rfloor^{0.01}$,
so by Proposition \ref{prop:numnoncoalgeo} we have
\begin{equation}   \label{eq:upbound:2}
\PP[\cC_{\lfloor \tau/2\rfloor, \lfloor \frac{r}{2} \rfloor}] + \PP[\cC_{\lfloor \tau/2\rfloor, \lfloor (s-r)/2 \rfloor}]
< 2e^{-c_1\tau^{1/8}},
\end{equation}
for some constant $c_1>0$.
By Proposition \ref{prop:spaflu}, and translation invariance, for some constant $c_2>0$ we have
\[
\PP[|f_0|>\lfloor \tau/2\rfloor^{1/16} \lfloor r/4\rfloor^{2/3}/2] ,\; \PP[|f_m - md|>\lfloor \tau/2\rfloor^{1/16} \lfloor r/4\rfloor^{2/3}/2] < e^{-c_2\tau^{1/16}}.
\]
Since $r$ is taken large enough, and $md < r^{2/3}$, $\tau > 10^{10}$, we get that 
\[
\PP[\max\{|f_0|, |f_m|\}\geq\lfloor \tau/2\rfloor^{1/16} \lfloor r/4\rfloor^{2/3}-1] < 2e^{-c_2\tau^{1/16}}.
\]
Using this and \eqref{eq:upbound:2}, we have that 
\[
\PP[| \LL_r \cap \cup_{i=0}^m \Gamma_{u_i, v_i} | > \tau ] < 2e^{-c_2\tau^{1/16}} + 2e^{-c_1\tau^{1/8}}.
\]
Finally, note that $| \LL_r \cap \cup_{i=0}^m \Gamma_{u_i, v_i} | \leq m+1 < r^{2/3}+1$, we have
\[
\begin{split}
&\E[ | \LL_r \cap \cup_{i=0}^m \Gamma_{u_i, v_i} | ]
\\
< &
\tau_0 
+ (r^{2/3}+1)\PP[| \LL_r \cap \cup_{i=0}^m \Gamma_{u_i, v_i} | > r^{0.001} ]
+ \tau_0\sum_{\tau=\lceil \tau_0\rceil}^{\lceil r^{0.001} \rceil} \PP[| \LL_r \cap \cup_{i=0}^m \Gamma_{u_i, v_i} | > \tau ]
\\
< &
\tau_0 
 + (r^{2/3}+1)(2e^{-c_2(r^{0.001})^{1/16}} + 2e^{-c_1(r^{0.001})^{1/8})}
+ \tau_0\sum_{\tau=\lceil \tau_0\rceil}^{\lceil r^{0.001} \rceil} 2e^{-c_2\tau^{1/16}} + 2e^{-c_1\tau^{1/8}},
\end{split}
\]
and this is upper bounded by a constant.
\end{proof}

\appendix

\section{Transversal fluctuation estimates and disjoint geodesics}
In this appendix we provide proofs of Proposition \ref{prop:spaflu} and \ref{prop:numnoncoalgeo}, following arguments in the proof of \cite[Theorem 3]{basu2017coalescence} and \cite[Theorem 2]{basu2018nonexistence}, respectively.

We start with estimates on passage times.
We have that $T_{\boo,(m,n)}$ has the same law as the largest eigenvalue of $X^*X$ where $X$ is an $(m+1)\times (n+1)$ matrix of i.i.d.\ standard complex Gaussian entries (see \cite[Proposition 1.4]{johansson2000shape} and \cite[Proposition 1.3]{BGHK19}).
Hence we get the following one point estimates from \cite[Theorem 2]{LR10}. 
\begin{theorem}
\label{t:onepoint}
There exist constants $C,c>0$, such that for any $m\geq n \geq 1$ and $x>0$, we have
\begin{equation}  \label{e:wslope}
\PP[T_{\boo, (m,n)}-(\sqrt{m}+\sqrt{n})^{2} \geq xm^{1/2}n^{-1/6}] \leq Ce^{-cx}.    
\end{equation}
In addition, for each $\psi>1$, there exist $C',c'>0$ depending on $\psi$ such that if $\frac{m}{n}< \psi$, we have
\begin{equation}  \label{e:slope}
\begin{split}
&\PP[T_{\boo, (m,n)}-(\sqrt{m}+\sqrt{n})^{2} \geq xn^{1/3}] \leq C'e^{-c'\min\{x^{3/2},xn^{1/3}\}},\\
&\PP[T_{\boo, (m,n)}-(\sqrt{m}+\sqrt{n})^{2} \leq -xn^{1/3}] \leq C'e^{-c'x^3},
\end{split}
\end{equation}
and as a consequence
\begin{equation}
\label{e:mean}
|\E T_{\boo, (m,n)} -(\sqrt{m}+\sqrt{n})^2|\leq C'n^{1/3}.
\end{equation}
\end{theorem}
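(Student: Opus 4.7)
The theorem is essentially a translation of known random matrix results into the LPP language, so the plan is to carry out that translation carefully and then extract the mean bound from the tail estimates.

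The first step will be to invoke the Johansson distributional identity: by the RSK correspondence and the Schur measure calculation in \cite[Proposition 1.4]{johansson2000shape} (see also \cite[Proposition 1.3]{BGHK19}), the passage time $T_{\boo,(m,n)}$ has the same distribution as the largest eigenvalue $\lambda_{\max}$ of $X^*X$, where $X$ is $(m+1)\times(n+1)$ with i.i.d.\ standard complex Gaussian entries, i.e.\ the top eigenvalue of a Laguerre Unitary Ensemble. With this identification in hand, both \eqref{e:wslope} and \eqref{e:slope} reduce to tail inequalities for $\lambda_{\max}$ of an LUE, which are precisely what is proved in \cite[Theorem 2]{LR10}. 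In particular, for the far-from-square regime one uses the $m^{1/2}n^{-1/6}$ scaling of LR to read off \eqref{e:wslope}, while in the bounded-ratio regime $m/n<\psi$ the LUE edge fluctuates on the scale $n^{1/3}$ and converges to Tracy--Widom, which gives the upper bound with the characteristic $\min\{x^{3/2},xn^{1/3}\}$ rate (the crossover appears because the LR bound degrades from the Tracy--Widom stretched exponential to a pure exponential once $x\gg n^{1/3}$) and the cubic lower-tail rate.

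Once the two tail estimates of \eqref{e:slope} are in place, the mean estimate \eqref{e:mean} follows by a standard integration argument. Writing $Y := (T_{\boo,(m,n)}-(\sqrt{m}+\sqrt{n})^2)/n^{1/3}$, we have
\[
|\E Y| \leq \int_0^\infty \PP[|Y|\geq x]\,dx \leq \int_0^\infty \left( C'e^{-c'\min\{x^{3/2},xn^{1/3}\}}+C'e^{-c'x^3} \right)dx,
\]
and both integrals are bounded by constants depending only on $\psi$; multiplying back by $n^{1/3}$ gives \eqref{e:mean}.

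There is no real obstacle here beyond bookkeeping: the nontrivial inputs (the RSK/Schur identity and the LR eigenvalue estimates) are black-boxed. The only step requiring any care is matching the LR normalization $(\sqrt{m+1}+\sqrt{n+1})^2$ to the shape constant $(\sqrt{m}+\sqrt{n})^2$ used here, which costs at most an $O(n^{-1/6})$ shift and is absorbed into the constants $C,c,C',c'$ for $n$ bounded below (and is handled separately by choosing $C$ large for small $n$).
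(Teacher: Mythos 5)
Your proposal follows exactly the route the paper takes: invoke the Johansson/RSK identity to write $T_{\boo,(m,n)}$ as an LUE largest eigenvalue, then read off \eqref{e:wslope} and \eqref{e:slope} from \cite[Theorem 2]{LR10}, and deduce \eqref{e:mean} by integrating the tails; the paper presents this theorem as an immediate corollary of those two citations without further elaboration. One small correction to your bookkeeping remark: the discrepancy $(\sqrt{m+1}+\sqrt{n+1})^2-(\sqrt{m}+\sqrt{n})^2$ is of order $1+\sqrt{m/n}$, not $O(n^{-1/6})$; it is nonetheless still dominated by the fluctuation scales $n^{1/3}$ and $m^{1/2}n^{-1/6}$ respectively for $n$ large, so it can indeed be absorbed into the constants as you conclude.
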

We also have the following segment to segment estimate.
\begin{proposition}
\label{t:treetilted}
Let $A$ and $B$ be segments of length $n^{2/3}$ which are aligned with $\LL_0$ and $\LL_{2n}$ respectively, and let their midpoints being $(m,-m)$ and $\bn$.
For each $\psi<1$, there exist constants $C,c>0$ depending only on $\psi$, such that when $|m|<\psi n$,
\[
\PP\Big[\sup_{u\in A, v\in B} |T_{u,v}-\E T_{u,v}| \geq x n^{1/3}\Big]\leq Ce^{-cx}.
\]
\end{proposition}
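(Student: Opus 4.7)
To prove this segment-to-segment fluctuation bound, I would adapt the argument of \cite[Theorem 3]{basu2017coalescence}, combining the pointwise moderate deviation bounds of Theorem \ref{t:onepoint} with a monotonicity-based reduction. The first step is pointwise concentration: for each fixed $(u, v) \in A \times B$, the $\psi$-dependent bound \eqref{e:slope} of Theorem \ref{t:onepoint}, together with \eqref{e:mean}, gives $\PP[|T_{u, v} - \E T_{u, v}| \geq x n^{1/3}] \leq C_\psi e^{-c_\psi x}$, because $v - u$ has both coordinates in the range $[(1 - \psi - o(1))n, (1 + \psi + o(1))n]$ so the aspect ratio stays bounded by $(1+\psi)/(1-\psi)$.

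Second, I would upgrade this to a uniform bound over $A \times B$. For an arbitrary $(u, v)$, sandwich $T_{u, v}$ between passage times involving a small, finite collection of reference endpoints. For an upper bound, path monotonicity (coordinatewise extension of up-right paths, using $\xi_v \geq 0$) gives $T_{u, v} \leq T_{u^-, v^+}$ whenever $u^- \leq u$ and $v \leq v^+$. For a lower bound, use the concatenation inequality $T_{u^-, v^+} \geq T_{u^-, u} + T_{u, v} + T_{v, v^+} - \xi_u - \xi_v$ to bound $T_{u, v}$ from below by $T_{u^-, v^+}$ minus short ``gadget'' passage times. Each of these reference or gadget passage times is a one-pair quantity to which Theorem \ref{t:onepoint} applies; a union bound over a constant number (depending only on $\psi$) of reference configurations $(u^-, v^+)$, combined with these one-point tails, produces the claimed $Ce^{-cx}$ bound.

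The main technical obstacle is matching the mean discrepancies in step two. Naively $\E T_{u^-, v^+} - \E T_{u, v}$ is of order $n^{2/3}$, much larger than the target $n^{1/3}$. The crucial cancellation is that this excess is nearly matched by $\E T_{u^-, u} + \E T_{v, v^+}$ from the gadgets, leaving a residual of order $n^{1/3}$ that comes only from the second-order (curvature) term in the Taylor expansion of the limit shape $(\sqrt{a}+\sqrt{b})^2$; the hypothesis $|m| < \psi n$ is used precisely here to keep the derivatives of the shape function uniformly bounded in terms of $\psi$. The gadget fluctuations themselves are at scale $(n^{2/3})^{1/3} = n^{2/9}$ by \eqref{e:slope} applied to $n^{2/3}$-scale LPP, and so are absorbed into the target $n^{1/3}$ scale. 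No fresh probabilistic input is needed beyond Theorem \ref{t:onepoint}; the work is in bookkeeping this cancellation and in the appropriate choice of $u^-, v^+$.
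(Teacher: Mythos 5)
There is a genuine gap: the claimed cancellation of the mean discrepancies fails, and with it the whole sandwich argument as you have set it up. Take $u^-$ and $v^+$ to be the corner points at distance $\Theta(n^{2/3})$ from $A$ and $B$ respectively, so that $u - u^- = n^{2/3}(\sigma, 1-\sigma)$ and $v^+ - v = n^{2/3}(\tau, 1-\tau)$ for $\sigma,\tau\in[0,1]$ as $u,v$ range over the segments. Expanding the shape function $g(a,b)=(\sqrt a+\sqrt b)^2$, the superadditivity defect
\[
g(v^+-u^-)-g(v-u)-g(u-u^-)-g(v^+-v)
= n^{2/3}\Bigl[\bigl(\sqrt\sigma-\sqrt{1-\sigma}\bigr)^2 + \bigl(\sqrt\tau-\sqrt{1-\tau}\bigr)^2\Bigr]+O(n^{1/3}),
\]
which is $\Theta(n^{2/3})$ except when $\sigma=\tau=1/2$, i.e.\ except when $u$ and $v$ sit at the midpoints. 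So for $u$ near the ends of $A$ the mean discrepancy is genuinely of order $n^{2/3}$, not $n^{1/3}$, and neither the monotone upper bound $T_{u,v}\le T_{u^-,v^+}$ nor the concatenation rearrangement can recover a sharp $n^{1/3}$-scale estimate with these nearby references. (Separately, note that the concatenation inequality $T_{u^-,v^+}\ge T_{u^-,u}+T_{u,v}+T_{v,v^+}-\xi_u-\xi_v$ rearranges to an \emph{upper} bound on $T_{u,v}$, not a lower bound; to lower-bound $T_{u,v}$ one must route the path \emph{through} interior points $u'\ge A$, $v'\le B$ and use $T_{u,v}\ge T_{u,u'}+T_{u',v'}+T_{v',v}-\xi_{u'}-\xi_{v'}$.)

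The fix, which is exactly what the paper does, is to place the outer reference points at distance $\Theta(n)$ (namely $-\bn$ and $2\bn$), where the displacement vectors to the segments are nearly diagonal and the curvature loss really is $O((n^{2/3})^2/n)=O(n^{1/3})$. The price is that the gadget passage times $T_{-\bn,u}$, $T_{v,2\bn}$ are now at scale $n$, so their fluctuations are themselves of size $n^{1/3}$ and a naive union bound over the $\sim n^{2/3}$ points of $A$ gives only $n^{2/3}e^{-cx}$, which is useless for $x = O(\log n)$. Controlling $\inf_{u\in A}(T_{-\bn,u}-\E T_{-\bn,u})$ uniformly therefore requires additional probabilistic input: the paper builds a dyadic tree (Lemma \ref{l:infpointtoside}) to prove the segment-to-point lower-tail bound, applies it twice to get the lower tail of $\inf_{u,v}T_{u,v}$, and then combines the upper tail of $\sup_{u,v}T_{u,v}$ with the tree lemma via the FKG inequality and the one-point bound \eqref{e:slope} on $T_{-\bn,2\bn}$. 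Your proposal does not supply any such uniformity mechanism, and the union bound ``over a constant number of reference configurations'' does not substitute for it.
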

A more general version of this result (where $u, v$ are taken in a parallelogram rather than two segments) is proved as \cite[Proposition 10.1, 10.5]{basu2014last}, in the setting of Poissionian DLPP (see also \cite[Proposition B.1]{hammond2020modulus}).
In the setting of exponential DLPP a proof is given in \cite[Appendix C]{basu2019temporal} by Basu, Ganguly, and the author, following the ideas in \cite{basu2014last}.
For completeness we also write the proof here, and it is reproduced from the proof in \cite[Appendix C]{basu2019temporal}.

We start with a segment to point lower bound.
\begin{lemma}
\label{l:infpointtoside}
Let $A'$ denote the line segment of length $2n^{2/3}$ on $\LL_0$ with midpoint at $(m,-m)$.
For each $\psi<1$, there exist $C,c>0$ depending only on $\psi$ such that when $|m|<\psi n$, we have for all $x>0$ and all $n\geq 1$,
\[\PP\Big[ \inf_{u\in A'}  (T_{u,\bn}-\E T_{u,\bn}) \leq -xn^{1/3}\Big]\leq Ce^{-cx^3}.\]
\end{lemma}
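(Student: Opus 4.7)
The plan is to apply superadditivity of passage times with an intermediate point $v$ chosen along the characteristic ray from $u_0 := (m,-m)$ to $\bn$, in order to reduce the segment infimum to a single-point lower-tail estimate whose deterministic error is controlled by the parabolic curvature of the limit shape.

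Concretely, I would set $v := \tfrac{1}{2}(u_0 + \bn) = \bigl((m+n)/2, (n-m)/2\bigr) \in \LL_n$. For every $u = (m+j, -m-j) \in A'$ with $|j| \leq n^{2/3}$, the hypothesis $|m| < \psi n$ with $\psi < 1$ ensures $v \geq u$ coordinatewise once $n$ is large, so superadditivity yields $T_{u,\bn} \geq T_{u,v} + T_{v,\bn}$. The crucial feature of placing $v$ on the characteristic ray from $u_0$ to $\bn$ is that it cancels the linear-in-$j$ term in the shape-function gap $G(u) := g(\bn - u) - g(v - u) - g(\bn - v)$, where $g(a,b) := (\sqrt a + \sqrt b)^2$. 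A direct second-order Taylor expansion then gives $|G(u)| \leq C_\psi j^2/n \leq C_\psi n^{1/3}$ uniformly in $u \in A'$, with $C_\psi$ depending only on $\psi$. Combined with the $O(n^{1/3})$ errors coming from \eqref{e:mean}, the deterministic gap $\Delta(u) := \E T_{u,\bn} - \E T_{u,v} - \E T_{v,\bn}$ is $\leq C'_\psi n^{1/3}$ uniformly.

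Rearranging and taking the infimum,
\[
\inf_{u \in A'}(T_{u,\bn} - \E T_{u,\bn}) \geq (T_{v,\bn} - \E T_{v,\bn}) + \inf_{u \in A'}(T_{u,v} - \E T_{u,v}) - C'_\psi n^{1/3}.
\]
The first term on the right enjoys the desired $Ce^{-cx^3}$ lower tail by \eqref{e:slope} applied at scale $n/2$, with constants depending on the aspect ratio of $\bn - v$ and hence only on $\psi$. The residual $\inf_{u \in A'}(T_{u,v} - \E T_{u,v})$ is a segment-to-point infimum at the smaller scale $n/2$, which I would bound by iterating the same construction along successive midpoints on the characteristic ray until the time scale shrinks below the transversal scale $n^{2/3}$, at which point a crude union bound suffices because the one-point fluctuation scale $n^{2/9}$ is much smaller than $n^{1/3}$.

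The main obstacle is controlling the accumulated error from the $O(\log n)$ iterations: each step introduces a fresh shape-curvature contribution (which grows like $j^2/\ell$ as the scale $\ell$ decreases) together with a fresh one-point tail term. Preserving the $Ce^{-cx^3}$ exponent requires allocating the deviation budget $xn^{1/3}$ geometrically across scales, so that the per-scale one-point bound is of the form $Ce^{-cx^3 2^{-\alpha k}}$ and the union-bound cost does not degrade the final exponent. For $x$ below an absolute constant, $Ce^{-cx^3}$ is bounded below by a positive constant and the claim is trivial by choosing $C$ sufficiently large.
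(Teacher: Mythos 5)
Your first step is sound: for the single midpoint $v = \tfrac12(u_0+\bn)$, a Taylor expansion of $g(a,b)=(\sqrt a+\sqrt b)^2$ does give a shape deficit of order $j^2/n \leq n^{1/3}$ uniformly over $A'$, and combined with \eqref{e:mean} this reduces the problem to the residual $\inf_{u\in A'}(T_{u,v}-\E T_{u,v})$. The gap is in the iteration. Because every intermediate point $v_k = (1-2^{-k})u_0 + 2^{-k}\bn$ sits on the \emph{single} characteristic ray through the midpoint $u_0$, the anti-diagonal offset $j$ of a point $u\in A'$ relative to the chain does not shrink as the scale shrinks. Consequently the cumulative shape deficit
\[
g(\bn-u) - g(v_k-u) - g(\bn-v_k) \approx \frac{j^2(2^k-1)}{n}
\]
is a \emph{deterministic} loss. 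At the stopping scale $2^{-k}n \approx n^{2/3}$ (which you cannot push further, since $u\not\leq v_k$ once $2^{-k}n < |j|$) and for $|j|\approx n^{2/3}$ at the edge of $A'$, this is of order $n^{4/3}\cdot n^{1/3}/n = n^{2/3} \gg x n^{1/3}$. This is not something that can be repaired by ``allocating the deviation budget geometrically'' — it is a curvature term, not a fluctuation, and no choice of per-scale stochastic thresholds reduces it. Your own estimate ``each step introduces a fresh shape-curvature contribution which grows like $j^2/\ell$'' is accurate; summing $j^2\cdot 2^i/n$ over $i$ makes the series diverge geometrically rather than converge.

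The paper avoids this by building a \emph{tree} rather than a chain: at level $j$ it places $4^j$ vertices on $\LL_{8^{-j}(2n)}$ spanning the full width $2n^{2/3}$, so each leaf $u$ has a path to $\bn$ in which the anti-diagonal offset of a single edge is $O(n^{2/3}/4^j)$ while the diagonal span is $O(n/8^j)$. The per-edge shape deficit is then $O\bigl((n^{2/3}/4^j)^2/(n/8^j)\bigr) = O(n^{1/3}2^{-j})$, and summing over levels gives $O(n^{1/3})$ as required. The factor-of-$4$ spatial refinement relative to the factor-of-$8$ temporal refinement is precisely what makes the deficit summable; any construction that keeps the spatial resolution fixed (as your chain does) or refines too slowly will not. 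If you want to keep the flavor of your argument, you would need to make the intermediate points at each scale $u$-dependent (i.e.\ place them on the characteristic ray from $u$, not from $u_0$), but then the events at each level vary over $u$ and you are forced into exactly the kind of union-over-a-net structure that the tree encodes.
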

\begin{proof}
For simplicity of notations, we write this proof only for the case $m=0$, and the same proof will apply to the general case.
We shall also ignore some rounding issues.
We will use $C,c>0$ to denote large and small universal constants throughout this proof, and the value could change from line to line.

We construct a tree $\mathcal{T}$ whose vertices are a subset of vertices of $\Z^2$; in particular root of $\mathcal{T}$ will be the vertex $\bn$ and the leaves of $\mathcal{T}$ are the vertices on $A'$. Let $n$ be sufficiently large so that there exists $J$ such that $n^{1/4}< 8^{-J}(2n)\leq n^{1/3}$.
For smaller $n$ the lemma follows by taking $C$ large and $c$ small enough. For $j=0,1,2,\ldots, J$, there will be $4^{j}$ vertices of $\mathcal{T}$ at level $j$ (let us denote this set by $\mathcal{T}_j$) on the line $\LL_{8^{-j}(2n)}$, such that these $4^{j}$ vertices divide the line joining $8^{-j}\bn +(-n^{2/3},n^{2/3})$ and $8^{-j}\bn +(n^{2/3},-n^{2/3})$ into $4^{J}+1$ equal length intervals. Notice that, for each $j$ the vertices in $\mathcal{T}_j$ are ordered under $\prec$ from left to right. The vertices of $\mathcal{T}$ is $\cup_{0\leq j\leq J} \mathcal{T}_j$, and the $k$-th vertex at level $j$ from the left is connected to the four vertices in level $(j+1)$ which are labeled $4k-3,4k-2,4k-1$ and $4k$ from the left. 

Noticing that it suffices to prove this lemma for $x$ sufficiently large.
Let $\cA_j$ denote the event that for all $u\in \mathcal{T}_j$ and for all $v\in \mathcal{T}_{j+1}$ such that the edge $(u,v)$ is present in $\mathcal{T}$, we have $T_{v,u}\geq \E T_{v,u}- x2^{-9j/10-10}n^{1/3}$. 
We claim that $\PP(\cup_{j}\cA_j^c) \leq Ce^{-cx^3}$ for all $x$ sufficiently large.
Indeed, by our construction of $\mathcal{T}$, for each edge between a vertex  $u\in \mathcal{T}_j$ and a vertex $v\in \mathcal{T}_{j+1}$, using \eqref{e:slope} we have that 
\[\PP[T_{v,u}-\E T_{v,u}\leq -x{2^{-9j/10-10}}n^{1/3}]\leq Ce^{-cx^32^{3j/10}}.\]
By taking a union bound over all $4^{j+1}$ such edges, and over all $j=0,1,2,\ldots, J-1$, we get that $\PP(\cup_{j}\cA_j^c) \leq Ce^{-cx^3}$.

Now it remains to show that, on $\cap_{0\leq j\leq J} \cA_j$, we have $\inf_{u\in A'}  (T_{u,\bn}-\E T_{u,\bn}) \geq -xn^{1/3}$ for all $x$ sufficiently large. 
Let us fix $u\in A'$ and let $u^{(J)}$ be its closest vertex in $\mathcal{T}_J$. Let $u<u^{(J)}< u^{(J-1)}<\ldots< u^{(0)}=\bn$ denote the path to $\bn$ in $\mathcal{T}$. Hence we have 
$T_{u,\bn} \geq \sum_{j=0}^{J-1} T_{u^{(j+1)},u^{(j)}}$.
By definition, on $\cap_{0\leq j\leq J} \cA_j$ we also have that 
$\sum_{j=0}^{J-1} T_{u^{(j+1)},u^{(j)}}- \E T_{u^{(j+1)},u^{(j)}} \geq -\frac{x}{2} n^{1/3}$
for $x$ sufficiently large. Observe also that by our definition of $\mathcal{T}$, we have $\E T_{u,u^{(J)}}\leq \frac{x}{4}n^{1/3}$ by \eqref{e:mean}, and hence it suffices to lower bound $\sum_{j=0}^{J} \E T_{u^{(j+1)},u^{(j)}}$ (here we write $u=u^{(J+1)}$). 
For $u=(u_1,u_2)\in \Z^2$, we denote $\phi(u)=u_1-u_2$. Observe now that, by the construction of $\mathcal{T}$ and the choice of $u^{(J)}$, we have for each $j\leq J$ that $| \phi(u^{(j+1)})-\phi(u^{(j)})|\leq \frac{Cn^{2/3}}{4^{j}}$. Using \eqref{e:mean} we get $\E T_{u^{(j+1)},u^{(j)}} \geq 2(d(u^{(j)})-d(u^{(j+1)}))-C2^{-j}n^{1/3}$, for each $j\leq J$.
Summing over $j$ from $0$ to $J$ we get $\sum_{j=0}^{J} \E T_{u^{(j+1)},u^{(j)}} \geq \E T_{u,\bn}-\frac{x}{4}n^{1/3}$, and this completes the proof.
\end{proof}
\begin{proof}[Proof of Proposition \ref{t:treetilted}]
Applying Lemma \ref{l:infpointtoside} twice we get the bound for the lower tail of $\inf_{u\in A, v\in B} T_{u,v}-\E T_{u,v}$.
Now denote
$\cA_0=\{\sup_{u\in A,v\in B}  (T_{u,v}-\E T_{u,v}) \geq xn^{1/3}\}$, it remains to show that
$\PP[\cA_0]\leq Ce^{-cx}$
for some $C,c>0$ depending only on $\psi$.
Again we only prove the case $m=0$ for simplicity of notations, and the general case follows similarly. We also observe that it suffices to prove the result for $x$ sufficiently large. 
Consider the following events:
\[
\cA_1=\{\inf_{u\in A} T_{-\bn,u}-\E T_{-\bn,u} \geq -\frac{xn^{1/3}}{10}\}, \;
\cA_2=\{\inf_{v\in B} T_{v,2\bn}-\E T_{v,2\bn} \geq -\frac{xn^{1/3}}{10}\},
\]
It follows from \eqref{e:mean} that for $x$ sufficiently large we have for any $u\in A$ and $v\in B$, we have
$\E T_{-\bn,u}+\E T_{u,v}+ \E T_{v,2\bn} \geq \E T_{-\bn,2\bn} -\frac{xn^{1/3}}{10}$.
It therefore follows that 
$\cA\supseteq \cA_0\cap \cA_1 \cap \cA_2$, where 
$\cA=\{T_{-\bn,2\bn}-\E T_{-\bn,2\bn} \geq \frac{xn^{1/3}}{2}\}$.
Since $\cA_0,\cA_1,\cA_2$ are all increasing events in the weights $\{\xi_v\}_{v\in\Z^2}$, it follows that 
$\PP[\cA]\geq \PP[\cA_0\cap \cA_1\cap \cA_2]\geq \PP[\cA_0]\PP[\cA_1]\PP[\cA_2]$
by the FKG inequality. The result follows by noting that we have $\PP[\cA_1], \PP[\cA_2]\geq \frac{1}{2}$ by Lemma \ref{l:infpointtoside}, and $\PP[\cA]\leq C'e^{-c'\min\{x^{3/2},xn^{1/3}\}}$ by \eqref{e:slope}.
\end{proof}

\subsection{Transversal fluctuation}
Now we give the proof of Proposition \ref{prop:spaflu}.
We note that it is an analog of \cite[Theorem 3]{basu2017coalescence}, in the sense that both bound the fluctuation of the geodesic from the diagonal.
The main difference is that in \cite[Theorem 3]{basu2017coalescence}, the fluctuation is measured in terms of the intersections with a vertical or horizontal line, while here we use the perpendicular distance of a point on the geodesic to the diagonal.
In our setting the transversal fluctuation at $\LL_{2r}$ is at most linear in $r$, avoiding some technical complexities (see also \cite[Remark 1.3]{basu2017coalescence}).
\begin{proof}[Proof of Proposition \ref{prop:spaflu}]
For simplicity of notations, we assume that $m=0$, and we write $\Gamma=\Gamma_{\boo, \bn}$.
We also assume that $n=2^Jr$ for some $J\in\Z_+$. The general case follows the same arguments.
By symmetry it suffices to show that $\PP[f_0 > xr^{2/3}] < e^{-cx}$.

Let $\alpha=2^{\frac{1}{6}}$.
For $0\leq j \leq J$, take $f_j\in\Z$ such that $(2^{j-1}r+f_0, 2^{j-1}r-f_0) = \Gamma \cap \LL_{2^jr}$. Denote $f_{J+1}=0$.
Let $\cB_j$ denote the event that $f_j > x((2\alpha)^jr)^{2/3}$ and $f_{j+1} \leq x((2\alpha)^{j+1}r)^{2/3}$.
Then $\{f_0 > xr^{2/3}\} \subset \cup_{j=0}^{J-1} \cB_j$, and it suffices to show that 
$\PP[\cB_j]\leq e^{-cx\alpha^{2j/3}}$.
For this we split $\LL_{2^jr}$ and $\LL_{2^{j+1}r}$ into segments of length $(2^jr)^{2/3}$.
For $t,t'=0,1,2,\ldots$, denote
\[
\begin{split}
U_t=\{(2^{j-1}r+f, 2^{j-1}r-f): f\in (x((2\alpha)^jr)^{2/3} + t(2^jr)^{2/3}, x((2\alpha)^jr)^{2/3} + (t+1)(2^jr)^{2/3}]\},\\
V_{t'}=\{(2^{j}r+f, 2^{j}r-f): f\in[x((2\alpha)^{j+1}r)^{2/3}-(t+1)(2^jr)^{2/3}, x((2\alpha)^{j+1}r)^{2/3}-t(2^jr)^{2/3})\},
\end{split}
\]
and we let $\cB_{j,t,t'}=\{\exists u\in U_t, v\in V_{t'}, \boo<u<v, T_{\boo,u}+T_{u,v}-T_{\boo,v}\geq 0\}$.
Then we have
$\cB_{j}\subset \cup_{t,t'=0}^{\infty}\cB_{j,t,t'}$.
We claim that $\PP[\cB_{j,t,t'}]\leq e^{-c_1(x\alpha^{\frac{2j}{3}}+t+t')}$ for some $c_1>0$.
Then summing over $j,t,t'$ gives the result.

Fix some $j,t,t'$, and assume that there exist $u\in U_t$ and $v\in V_{t'}$ with $\boo<u<v$.
Then for any $v=(v_1,v_2)\in V_{t'}$ we must have that $\frac{v_1}{v_2}\in (0.1, 10)$.
We can also assume that for all $u=(u_1,u_2)\in U_t$ and $v=(v_1,v_2)\in V_{t'}$, we have $\frac{u_2}{u_1}, \frac{v_2-u_2}{u_2-u_1}\in (0.01,100)$ (otherwise, we can get the desired bound of $\PP[\cB_{j,t,t'}]$ by \eqref{e:wslope} and \eqref{e:slope} and taking a union bound over all $u\in U_t$, $v\in V_{t'}$ with $\boo<u<v$).
Now by \eqref{e:mean}, we have that there exists some constant $C$ independent of $r,x,t,t',j$, such that for all $u\in U_t, v\in V_{t'}$ and all $x$ sufficiently large,
we have $\E T_{\boo,u}+\E T_{u,v}\leq \E T_{\boo,v}-C(x\alpha^{\frac{2j}{3}}+t+t')^2(2^jr)^{1/3}$.
By Proposition \ref{t:treetilted} we get  $\PP[\cB_{j,t,t'}]\leq e^{-c_1(x\alpha^{\frac{2j}{3}}+t+t')}$ as desired.
\end{proof}

\subsection{Disjoint geodesics}
For completeness, in this subsection we give a proof of Proposition \ref{prop:numnoncoalgeo} (which is precisely \cite[Corollary 2.7]{basu2018nonexistence}). It is reproduced from the proof of \cite[Theorem 2]{basu2018nonexistence}, and the only difference is that we write in the slightly more general setting of \cite[Corollary 2.7]{basu2018nonexistence}.

Fix $n, \ell$, which are sufficiently large and satisfy $\ell < n^{0.01}$.
Below we will ignore some rounding issues.
Let $h=\ell^{1/2}$, consider $h$ line segments $L_{0},\cdots, L_h$, each with slope $-1$ and length $2\ell ^{1/8}n^{2/3}$, and the midpoints are in the line connecting $\boo$ and $\bn$.
These segments are equally spaced with internal spacing $\frac{2n}{h}$, and $L_{0}\subset\LL_{0}$, $L_{h}\subset\LL_{2n}$.

Let $\cE$ be the event that there exist $u\in A_{\ell,n}$, $v\in B_{\ell,n}$, and $0\leq i \leq h$, such that $\Gamma_{u,v}$ does not intersect $L_i$.
Then we have $\PP(\cE)\leq e^{-c\ell^{1/8}}$ for some $c>0$.
Indeed, by Lemma \ref{l:ordered}, $\cE$ implies that at least one of $\Gamma_{u_l,u_l+\bn}$ and $\Gamma_{u_r,u_r+\bn}$ do not intersect some $L_i$.
Here $u_l$ and $u_r$ are two end points on $A_{\ell,n}$.
Then we just apply Proposition \ref{prop:spaflu}.

Take a constant $c_1>0$ to be determined.
Using \eqref{e:mean} and Proposition  \ref{t:treetilted}, we have that 
$\PP[\inf_{u\in A_{\ell,n}, v\in B_{\ell,n}} T_{u,v} \leq 4n-c_1\ell^{1/4}n^{1/3}] \leq  e^{-c\ell^{1/4}}$
for some $c>0$. Now to get Proposition \ref{prop:numnoncoalgeo} it suffices to prove the following result.
\begin{proposition}
\label{p:multi}
Let $\cg_{\ell,n}$ denote the event that there exists $\tu_1, \cdots , \tu_{\ell}\in A_{\ell,n}$ and $\tv_1, \cdots , \tv_{\ell}\in B_{\ell,n}$, and disjoint paths $\gamma_{i}$ joining $\tu_i$ and $\tv_i$, such that $\gamma_i$ intersects $L_k$ for each $0\leq k\leq h$,
and $T(\gamma_{i}) \geq 4n-c_1\ell^{1/4} n^{1/3}$.
Then $\PP(\cg_{\ell,n})\leq e^{-c\ell^{1/4}}$.
\end{proposition}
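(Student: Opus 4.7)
My plan is to adapt the argument of \cite[Theorem 2]{basu2018nonexistence} to this slightly more general setting. The strategy is a pigeonhole reduction to a single-stage estimate, followed by a union-bound / moderate-deviation argument at the intermediate scale $n' := n/h = n/\ell^{1/2}$.

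The first step is the pigeonhole reduction. Suppose $\cg_{\ell,n}$ holds with disjoint paths $\gamma_1, \ldots, \gamma_\ell$. For each $1 \le i \le \ell$ and $0 \le k \le h$, pick an intersection point $p_i^{(k)} \in \gamma_i \cap L_k$, and let $\gamma_i^{(k)}$ denote the sub-path of $\gamma_i$ from $p_i^{(k)}$ to $p_i^{(k+1)}$. Then $T(\gamma_i) = \sum_{k=0}^{h-1} T(\gamma_i^{(k)})$, so summing the hypothesis over $i$ gives $\sum_{i,k} T(\gamma_i^{(k)}) \ge \ell(4n - c_1\ell^{1/4}n^{1/3})$. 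By averaging over $k$, there must exist a stage $k^*$ at which $\sum_{i=1}^\ell T(\gamma_i^{(k^*)}) \ge 4\ell n' - c_1 \ell^{3/4} n^{1/3}$; rewriting $n^{1/3} = \ell^{1/6}(n')^{1/3}$, this is $4\ell n' - c_1 \ell^{11/12} (n')^{1/3}$. Moreover, the $\gamma_i^{(k^*)}$ are $\ell$ mutually disjoint paths from points on $L_{k^*}$ to points on $L_{k^*+1}$.

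The second step is the stage-scale estimate: for each fixed $k^*$, I would bound the probability that such an $\ell$-tuple of disjoint paths exists by $h^{-1}e^{-c\ell^{1/4}}$, then union-bound over the $h+1 = \ell^{1/2}+1$ choices of $k^*$. Within one stage, the segments $L_{k^*}$ and $L_{k^*+1}$ have length $2\ell^{1/8}n^{2/3} = 2\ell^{11/24}(n')^{2/3}$ at the scale $n'$, which is precisely the regime covered by the transversal fluctuation estimate Proposition \ref{prop:spaflu} and the segment-to-segment estimate Proposition \ref{t:treetilted}. I would discretize the possible endpoints on each segment into $O(\ell^{11/24})$ buckets of size $(n')^{2/3}$, union-bound over the $O(\ell^{11/24})^{2\ell}$ resulting tuples of endpoint boxes, and for each tuple bound the probability that the corresponding segment-to-segment passage times sum to the required amount using the exponential moderate deviation tail of Theorem \ref{t:onepoint}.

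The main obstacle is getting the correct exponent $\ell^{1/4}$ rather than a suboptimal $\log\ell$ or $\ell^{1/2}$; this requires carefully balancing the combinatorial cost of enumerating endpoint tuples against the large-deviation gain from requiring the sum of $\ell$ disjoint sub-path weights to exceed its typical value by a specific margin. This is exactly the delicate balance carried out in \cite[Theorem 2]{basu2018nonexistence}, and since the scales of the segments $A_{\ell,n}$, $B_{\ell,n}$, and $L_k$ match those used there, essentially the same calculation should carry through verbatim, with the only modification being to track the dependence on the length parameter $\ell^{1/8}n^{2/3}$ of the segments (as opposed to the specific length used in the original statement). All other ingredients are already assembled in the preceding subsection.
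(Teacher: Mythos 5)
Your proposal takes a genuinely different route from the paper, and unfortunately the pigeonhole reduction has a gap that I don't think can be repaired with the tools you've listed.

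The paper's proof keeps the per-path constraint $T(\gamma_i) \geq 4n - c_1\ell^{1/4}n^{1/3}$ intact and discretizes at \emph{all} $h$ intermediate levels $L_0,\dots,L_h$, so that each admissible path $\gamma_J$ is forced through a narrow window $L_{i,j_i}$ (of width $\approx c_0(n/h)^{2/3}$) at every level. Lemma \ref{l:mean} then gives a mean \emph{deficit} of order $(n/h)^{1/3}$ per stage, and summing $h$ independent stages yields a deficit $\approx h^{2/3}n^{1/3}$; Bernstein turns the per-path constraint into a rare event with probability $e^{-ch^{1/2}}$ (Lemma \ref{l:thin1}). Crucially, the disjointness of the $\ell$ paths is handled by the BK inequality — disjoint occurrence of increasing events has probability at most the product of the marginals — giving $e^{-c\ell^{5/4}}$, which easily dominates the entropy $\ell^{\ell^{23/24+\epsilon}}$ of ordered window-sequences.

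Your pigeonhole destroys the structure that makes the event rare. After averaging you only retain a \emph{sum} constraint $\sum_i T(\gamma_i^{(k^*)}) \geq 4\ell n' - c_1\ell^{11/12}(n')^{1/3}$ at one stage, and the $\ell$ sub-paths are no longer forced through narrow windows at intermediate levels within that stage — they need only start on $L_{k^*}$ and end on $L_{k^*+1}$. There are only $O(\ell^{11/24})$ endpoint buckets of width $(n')^{2/3}$ on each side, so for $\ell$ large many of the $\ell$ sub-paths must share an endpoint bucket pair $(j,j')$. Disjoint paths sharing a bucket pair can each achieve passage time close to $S_{j,j'} := \sup_{u,v} T_{u,v}$ over that pair, and a single moderate value of $S_{j,j'}$ (a constant-probability event, not a large deviation) then makes the sum exceed the threshold. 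In other words, the sum constraint alone has no small probability without exploiting disjointness \emph{within} a shared bucket pair, which would require a watermelon-type bound — i.e., essentially the statement you are trying to prove, applied at scale $n'$. Your plan to ``bound the probability that the corresponding segment-to-segment passage times sum to the required amount using Theorem \ref{t:onepoint}'' therefore cannot produce a useful bound: Theorem \ref{t:onepoint} is a one-point upper-tail estimate and does not encode the mean deficit of Lemma \ref{l:mean}, which you never invoke, nor does it encode disjointness, for which you would need the BK inequality, which you also never invoke. Separately, the entropy of your union bound, $O(\ell^{11/24})^{2\ell}$, is far too large; you would at minimum need to restrict to ordered tuples via Lemma \ref{l:ordered}, as the paper does, to bring it down to roughly $\ell^{\ell^{11/24}}$.
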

Our argument to prove this proposition is as follows.
Take some $g\in\Z_+$ to be determined, and divide the line segment $L_i$ into equally spaced line segments $L_{i,j}$ (for $j \in [-\ell^{1/8}g, \ell^{1/8}g)\cap \Z$), each of length $\frac{n^{2/3}}{g}$. 
For a fixed sequence $J:=\{j_0, j_1,j_2,\ldots , j_{h-1}, j_{h}\}$ taking values in $[-\ell^{1/8}g, \ell^{1/8}g)\cap \Z$, we shall consider the best path $\gamma_{J}$ from $A_{\ell,n}$ to $B_{\ell,n}$ that passes through the line segment $L_{i,j_{i}}$ for each $i=0,1,\ldots, h$. We shall show that $T(\gamma_{J})$ is typically much smaller than $4n$. For this we need the following  lemma.
\begin{lemma}
\label{l:mean}
Let $A_*$ denote the line segment joining $(-\frac{c_0n^{2/3}}{2}, \frac{c_0n^{2/3}}{2})$ and $(\frac{c_0n^{2/3}}{2}, -\frac{c_0n^{2/3}}{2})$, and let $B_*=A_*+(n+m,n-m)$. For $c_0$ sufficiently small there exists $c_2>0$ such that for all $n$ sufficiently large and $|m|<0.9n$, we have
$\E \sup_{u\in A_*, v\in B_*} T_{u,v} \leq 4n-c_2n^{1/3}$.
\end{lemma}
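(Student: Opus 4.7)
My proof plan combines three ingredients: a deterministic first-order bound on the mean, a refinement that extracts the negative Tracy--Widom mean, and the sup concentration from Proposition \ref{t:treetilted}. First, I would parametrize $u=(a,-a)\in A_*$ and $v=(n+m+b,n-m-b)\in B_*$ with $|a|,|b|\leq c_0 n^{2/3}/2$, and set $\delta=b-a$, so $|\delta|\leq c_0 n^{2/3}$. A direct calculation gives
\[
(\sqrt{v_1-u_1}+\sqrt{v_2-u_2})^2 \;=\; 2n + 2\sqrt{n^2-(m+\delta)^2} \;\leq\; 4n.
\]
Combined with \eqref{e:mean} this only yields $\sup_{u,v}\E T_{u,v}\leq 4n+C'n^{1/3}$, which is too weak; the needed gain must come from the negative mean of the Tracy--Widom distribution.

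To extract this gain, I would invoke Johansson's Tracy--Widom convergence \cite{johansson2000shape}: $(T_{\boo,(p,q)}-(\sqrt{p}+\sqrt{q})^2)/\sigma_{p,q}$ converges in distribution to $F_2$ as $p,q\to\infty$ with $p/q$ in a compact subset of $(0,\infty)$, where $\sigma_{p,q}$ is an explicit positive scaling of order $n^{1/3}$. The upper- and lower-tail moderate deviation estimates of \eqref{e:slope} provide uniform integrability, so the convergence is also in $L^1$. Since $\E F_2<0$ and, for $|m|\leq 0.9n$ with $c_0$ bounded, the ratio $(v_1-u_1)/(v_2-u_2)$ stays in a compact subset of $(0,\infty)$ uniformly in $u\in A_*,v\in B_*$, we obtain a constant $c_3>0$ independent of $n$ such that for $n$ large,
\[
\E T_{u,v} \;\leq\; (\sqrt{v_1-u_1}+\sqrt{v_2-u_2})^2 - c_3 n^{1/3} \;\leq\; 4n-c_3 n^{1/3}.
\]

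Finally, applying Proposition \ref{t:treetilted} to the length-$c_0 n^{2/3}$ segments $A_*,B_*$ and integrating the exponential tail gives $\E[\sup_{u,v}(T_{u,v}-\E T_{u,v})]\leq K(c_0)n^{1/3}$ for some constant $K(c_0)$; combining with the previous display yields
\[
\E \sup_{u,v}T_{u,v} \;\leq\; 4n - (c_3-K(c_0))\,n^{1/3}.
\]
A careful revisit of the dyadic chaining behind Proposition \ref{t:treetilted}, restricted to scales at most $c_0 n^{2/3}$, shows $K(c_0)\to 0$ as $c_0\to 0$ (intuitively, a sup over a shorter range has smaller Brownian-type fluctuations). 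Taking $c_0$ small enough that $K(c_0)\leq c_3/2$ gives the lemma with $c_2=c_3/2>0$. The main obstacle, and the reason the lemma demands $c_0$ sufficiently small, is precisely this last step: controlling the concentration constant $K(c_0)$ as $c_0$ shrinks so that the Tracy--Widom negative-mean gain is not overwhelmed by the sup fluctuation.
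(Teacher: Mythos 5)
Your steps~(1) and~(2) are fine and use the same input the paper does (Tracy--Widom convergence plus the moderate deviation bounds to extract a negative $c_3 n^{1/3}$ from $\E T_{u,v}$ pointwise). However, the decomposition $\E\sup T_{u,v}\leq \sup\E T_{u,v}+\E[\sup(T_{u,v}-\E T_{u,v})]$ places all of the burden on the claim $K(c_0)\to 0$, and this is where there is a genuine gap. Proposition~\ref{t:treetilted} is stated for segments of length exactly $n^{2/3}$ at separation $2n$; its proof of the \emph{upper}-tail (the part you need for the sup) is not a chaining argument at all, but an FKG argument comparing $\cA_0=\{\sup_{u,v}(T_{u,v}-\E T_{u,v})\geq xn^{1/3}\}$ against the single-point event $\cA=\{T_{-\bn,2\bn}-\E T_{-\bn,2\bn}\geq xn^{1/3}/2\}$. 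The resulting tail constant is governed entirely by the one-point upper-tail estimate \eqref{e:slope} and does not improve when $A_*,B_*$ shrink. So ``applying Proposition~\ref{t:treetilted} to length-$c_0 n^{2/3}$ segments and integrating'' gives a constant $K$ that is bounded away from zero, not $K(c_0)\to 0$. What you actually need is a modulus-of-continuity estimate for the profile $u,v\mapsto T_{u,v}-\E T_{u,v}$: that the expected supremum of the \emph{increment} from a fixed center pair $(u_0,v_0)$ over a scaled window of width $c_0$ is $O(\sqrt{c_0})\,n^{1/3}$. That is true (it is essentially the Brownian regularity of the weight profile) but it is a different and substantially harder statement than anything derivable by ``restricting the dyadic chaining'' in Lemma~\ref{l:infpointtoside}, which is itself only an \emph{inf} lower bound, not a sup upper bound.

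The paper avoids this difficulty entirely with a different decomposition. It picks auxiliary points $u_0=(-c_0^{3/2}n,-c_0^{3/2}n)$ and $v_0=\bn+(c_0^{3/2}n,c_0^{3/2}n)$ located beyond $A_*,B_*$ at diagonal distance $c_0^{3/2}n$, and writes
\[
\E\sup_{u\in A_*,v\in B_*}T_{u,v}\;\leq\;\E T_{u_0,v_0}-\E\inf_{u\in A_*}T_{u_0,u}-\E\inf_{v\in B_*}T_{v,v_0}.
\]
The point of the scale $c_0^{3/2}n$ is that $A_*$ and $B_*$, which have length $c_0 n^{2/3}=(c_0^{3/2}n)^{2/3}$, are exactly the natural segment length for Proposition~\ref{t:treetilted} applied with $n$ replaced by $c_0^{3/2}n$. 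Consequently the inf-fluctuation terms cost only $O\bigl((c_0^{3/2}n)^{1/3}\bigr)=O(c_0^{1/2}n^{1/3})$, which vanishes as $c_0\to 0$, while the Tracy--Widom gain in $\E T_{u_0,v_0}$ stays a fixed $-C'n^{1/3}$. In this way the smallness in $c_0$ comes for free from the scaling of the segment-to-point estimate at the matched scale, with no modulus-of-continuity input required. If you want to salvage your route, you would need to prove the modulus-of-continuity bound from scratch; otherwise, the paper's choice of $u_0,v_0$ is the cleaner fix.
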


\begin{proof}
For simplicity of notations we assume that $m=0$, and the same arguments work for the general case.
Consider $u_0= (-c_0^{3/2}n, -c_0^{3/2}n)$, $v_0=\bn+(c_0^{3/2}n, c_0^{3/2}n)$, and note
\[\E \sup_{u\in A_*, v\in B_*} T_{u,v} \leq  \E T_{u_0,v_0} -\E\inf_{u\in A_*} T_{u_0,u}-\E\inf_{v\in B_*} T_{v,v_0}.\]
By Proposition \ref{t:treetilted} we have $\E\inf_{u\in A_*} T_{u_0,u}, \E\inf_{v\in B_*} T_{v,v_0} \geq 4c_0^{3/2}n-Cc_0^{1/2}n^{1/3}$ for some constant $C>0$. From the Tracy-Widom convergence result of \cite{johansson2000shape}, and the fact that the GUE Tracy-Widom distribution has negative mean, it follows that that for $n$ sufficiently large $\E T_{u_0,v_0}\leq 4(n+2c_0^{3/2}n)-C'n^{1/3}$ for some $C'>0$. Thus we can choose $c_0$ sufficiently small to complete the proof.
\end{proof}
From now on we fix $c_0$ such that Lemma \ref{l:mean} holds, and choose $g=\frac{h^{2/3}}{c_0}$. 
\begin{lemma}
\label{l:thin1}
For any sequence $J:=\{j_0, j_1,j_2,\ldots , j_{h-1}, j_{h}\}$ taking values in $[-\ell^{1/8}g, \ell^{1/8}g)\cap \Z$, let $\gamma_{J}$ denote the best path from $A_{\ell,n}$ to $B_{\ell,n}$ that intersects the line segment $L_{i,j_{i}}$ for each $i=0,1,\ldots, h$. 
Then there exists  $c_1,c>0$, such that for each $J$ we have
$\PP[T(\gamma_{J})\geq 4n-c_1 h^{2/3}n^{1/3}]\leq e^{-ch^{1/2}}$.
\end{lemma}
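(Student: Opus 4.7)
The plan is to decompose $T(\gamma_J)$ as a sum of $h$ independent contributions, one per strip between successive levels $L_i, L_{i+1}$, and then to apply a Bernstein concentration inequality. For $0 \leq i \leq h-1$, I define
\[
Y_i \;:=\; \sup\bigl\{\, T_{u,v} - \xi_v : u \in L_{i,j_i},\ v \in L_{i+1,j_{i+1}},\ u < v\,\bigr\}.
\]
If $\gamma$ is an admissible path meeting $L_{i,j_i}$ at $u_i$ for each $i$, then writing the total weight of $\gamma$ as a telescoping sum of sub-path weights, bounding each sub-path by the corresponding geodesic, and correcting for the shared intermediate endpoints gives
\[
T(\gamma) \;\leq\; \sum_{i=0}^{h-1}\bigl(T_{u_i,u_{i+1}} - \xi_{u_{i+1}}\bigr) + \xi_{u_h}.
\]
Taking the supremum over admissible $\gamma$ yields $T(\gamma_J) \leq \sum_{i=0}^{h-1} Y_i + \max_{u \in L_{h,j_h}} \xi_u$. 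The crucial feature of $Y_i$ is that it depends only on weights $\xi_w$ with $d(L_i) \leq d(w) < d(L_{i+1})$, so the family $\{Y_i\}_{i=0}^{h-1}$ is mutually independent.

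For the mean, I set $n' := n/h$. Each segment $L_{i,j_i}$ has length $n^{2/3}/g = c_0 (n')^{2/3}$, and the midpoints of $L_{i,j_i}$ and $L_{i+1,j_{i+1}}$ differ in the transverse direction by at most $2\ell^{1/8} n^{2/3}$, which is at most $0.9\, n'$ since $\ell \leq n^{0.01}$. Lemma \ref{l:mean} applied at scale $n'$ then gives $\E Y_i \leq \E \sup_{u,v} T_{u,v} \leq 4n' - c_2 (n')^{1/3}$, so that
\[
\sum_{i=0}^{h-1} \E Y_i \;\leq\; 4n - c_2\, h^{2/3} n^{1/3}.
\]

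For concentration, Proposition \ref{t:treetilted} together with the domination $Y_i \leq \sup_{u,v} T_{u,v}$ provides a sub-exponential upper tail for $Y_i$ at scale $\sigma := (n/h)^{1/3}$. Applying the Bernstein inequality for sums of independent sub-exponential random variables with $t := (c_2/2) h^{2/3} n^{1/3}$, both $t^2/(h \sigma^2)$ and $t/\sigma$ are of order $h$, yielding
\[
\PP\Bigl[\sum_{i=0}^{h-1}(Y_i - \E Y_i) \geq t\Bigr] \;\leq\; e^{-c h}.
\]
The remaining term $\max_{u \in L_{h,j_h}} \xi_u$ is $O(\log n)$ with overwhelming probability and is negligible compared to $h^{2/3} n^{1/3}$. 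Choosing $c_1 := c_2/2$ and using $e^{-c h} \leq e^{-c h^{1/2}}$ completes the argument.

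The main technical step will be engineering the shifted suprema $Y_i$ so that the dependence between adjacent strips is pushed onto vanishing boundary terms, ensuring genuine independence; once that is in place, the Bernstein bound in the Gaussian regime in fact produces substantially more than the stated $e^{-c h^{1/2}}$ and is essentially routine.
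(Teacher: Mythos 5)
Your argument follows the paper's proof essentially verbatim: split $T(\gamma_J)$ into $h$ strip-level suprema, lower the expectation of each by a $c_2(n/h)^{1/3}$ margin via Lemma \ref{l:mean}, get sub-exponential upper tails from Proposition \ref{t:treetilted}, and close with Bernstein. One refinement you make is worth noting: with the paper's convention $T_{u,v}$ counts both endpoint weights, so the raw suprema $\sup_{u\in L_{i,j_i},\,v\in L_{i+1,j_{i+1}}} T_{u,v}$ for consecutive $i$ share the weights on the common segment and are not strictly independent as the paper asserts; your shifted quantities $Y_i = \sup(T_{u,v}-\xi_v)$ plus the single leftover $\max_{u\in L_{h,j_h}}\xi_u$ fix this cleanly. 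You are also right that Bernstein in fact yields $e^{-ch}$; the paper records only the weaker $e^{-ch^{1/2}}$, which is all that is used in Proposition \ref{p:multi}.
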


\begin{proof}
Fix a sequence $J$, first observe that $T(\gamma_{J})\leq \sum_{i=0}^{h-1} \sup_{u\in L_{i,j_i}, v\in L_{i+1,j_{i+1}}} T_{u,v}$,
which is a sum of $h$ independent random variables.
By our choice of $\ell$ and $h$ we have that the slope of the line joining the midpoints of $L_{i,j_i}$ and $L_{i+1,j_{i+1}}$ is between $1/2$ and $2$. Thus for each $i$, by Lemma \ref{l:mean} we have 
\[\E \sup_{u\in L_{i,j_i}, v\in L_{i+1,j_{i+1}}} T_{u,v}\leq  4n/h - c_2(n/h)^{1/3};\]
and by Proposition \ref{t:treetilted}, for large enough $x$ we have
\[\PP\Big[ \sup_{u\in L_{i,j_i}, v\in L_{i+1,j_{i+1}}} T_{u,v}-4n/h \geq x(n/h)^{1/3}\Big] \leq e^{-cx}.\]
By a Bernstein type bound on sum of independent variables with exponential tails, we have
$\PP[ T(\gamma_{J}) -(4n-c_2 h^{2/3}n^{1/3}) \geq x h^{1/6}n^{1/3}]\leq e^{-cx}$
for some $c>0$ and each $x>0$ sufficiently large. By taking $c_1=c_2/2$ and $x=c_1h^{1/2}$ we finish the proof.
\end{proof}

\begin{proof}[Proof of Proposition \ref{p:multi}]
We use the BK inequality, and bound the entropy term of disjoint paths.
Clearly we can assume that $n, \ell$ are sufficiently large with $\ell<n^{0.01}$.

We let $\mathcal{C}$ denote the set of all tuples $(J_1,J_2,\ldots , J_{\ell})$, where each $J_{i}=(j^{(i)}_{0}, \ldots, j^{(i)}_{h})$ is a sequence taking values in $[-\ell^{1/8}g, \ell^{1/8}g)\cap \Z$, satisfying that $j_k^{(i_1)}\leq j_k^{(i_2)}$ for any $1\leq i_1<i_2\leq \ell$ and $0\leq k\leq h$.
For $(J_1,J_2,\ldots , J_{\ell})\in \mathcal{C}$, let $\cA_{J_1,J_2,\ldots , J_{\ell}}$ denote the event that there exist disjoint paths $\gamma_1, \ldots, \gamma_{\ell}$ from $A_{\ell,n}$ to $B_{\ell,n}$, such that each $T(\gamma_{i}) \geq 4n-c_1h^{2/3}n^{1/3}$,
and $\gamma_i$ intersects $L_{k,j_k^{(i)}}$ for each $k$.
Since geodesics are ordered (Lemma \ref{l:ordered}), we have that $\cg_{\ell,n} \subset \cup_{(J_1,J_2,\ldots , J_{\ell})\in \mathcal{C}} \cA_{J_1,J_2,\ldots , J_{\ell}}$.

For each $1\le i\le \ell$, consider the event that there is a path $\gamma_i$ from $A_{\ell,n}$ to $B_{\ell,n}$, intersecting $L_{k,j_k^{(i)}}$ for each $k$, with $T(\gamma_i)\geq 4n-c_1h^{2/3}n^{1/3}$. This is an increasing event in the weights $\{\xi_v\}_{v\in\Z^2}$. Thus by the BK inequality \cite{van1985inequalities} \footnote{The typical BK inequality \cite[Theorem 3.3]{van1985inequalities} is stated for the case where the weights at vertices are i.i.d. Bernoulli. For our exponential weight setting we can directly apply \cite[Theorem 1.6(iii)]{van1985inequalities}.}, the probability of such events happening disjointly is upper bounded by the product of the marginal probabilities. It therefore follows using Lemma \ref{l:thin1} that $\PP(\cA_{J_1,J_2,\ldots , J_{\ell}})\leq e^{-c\ell^{5/4}}$.

It remains to bound $|\mathcal{C}|$.
Note that $\mathcal{C}$ can be enumerated by picking $(h+1)$ many non-decreasing sequences of length $\ell$, where each co-ordinate takes values between $-\ell^{1/8}g$ and $\ell^{1/8}g$. Now we need to enumerate positive integer sequences $-\ell^{1/8}g \leq y_1 \leq y_2 \leq \cdots \leq  y_{\ell} \leq \ell^{1/8}g$. By taking the difference sequence $z_{k}=(y_{k}-y_{k-1})$ this reduces to enumerating sequences with $z_1+z_2+\cdots +z_{\ell} \leq 2\ell^{1/8}g$. It is a standard counting exercise to see that number of such sequences is bounded by $\binom {\ell+2\ell^{1/8}g}{2\ell^{1/8}g}$.
Hence as $h=\ell^{1/2}$ and $g=\frac{h^{2/3}}{c_0}$, for any $\epsilon>0$ we have $|\mathcal{C}|\leq \ell^{\ell^{23/24+\epsilon}}$ and the result follows.
\end{proof}



\begin{thebibliography}{99}

\bibitem{26f0ff4cdf2a4bd6aa9281654a909d53}
J.~Baik, P.~Deift, K.~McLaughlin, P.~Miller, and X.~Zhou, \emph{Optimal tail
  estimates for directed last passage site percolation with geometric random
  variables}, Adv. Theor. Math. Phys. \textbf{5} (2001), no.~6, 1--41. \MR{1926668}

\bibitem{balazs2020local}
M.~Bal{\'a}zs, O.~Busani, and T.~Sepp{\"a}l{\"a}inen, \emph{Local stationarity
  of exponential last passage percolation}, \ARXIV{2001.03961}

\bibitem{BGHH}
R.~Basu, S.~Ganguly, A.~Hammond, and M.~Hegde, \emph{Fluctuation exponents for
  the geodesic watermelon}, \ARXIV{2006.11448}

\bibitem{BGHK19}
R.~Basu, S.~Ganguly, M.~Hegde, and M.~Krishnapur, \emph{Lower deviations in
  $\beta $-ensembles and law of iterated logarithm in last passage
  percolation}, \ARXIV{1909.01333}

\bibitem{basu2019temporal}
R.~Basu, S.~Ganguly, and L.~Zhang, \emph{Temporal correlation in last passage
  percolation with flat initial condition via brownian comparison}, \ARXIV{1912.04891}

\bibitem{basu2018nonexistence}
R.~Basu, C.~Hoffman, and A.~Sly, \emph{Nonexistence of bigeodesics in
  integrable models of last passage percolation}, \ARXIV{1811.04908}

\bibitem{basu2017coalescence}
R.~Basu, S.~Sarkar, and A.~Sly, \emph{Coalescence of geodesics in exactly
  solvable models of last passage percolation}, J. Math. Phys. \textbf{60}
  (2019), no.~9, 093301. \MR{4002528}

\bibitem{basu2014last}
R.~Basu, V.~Sidoravicius, and A.~Sly, \emph{Last passage percolation with a
  defect line and the solution of the slow bond problem}, \ARXIV{1408.3464, 2014}

\bibitem{busani2020universality}
O.~Busani and P.~Ferrari, \emph{Universality of the geodesic tree in last
  passage percolation}, \ARXIV{2008.07844}

\bibitem{corwin2012kardar}
I.~Corwin, \emph{{The Kardar--Parisi--Zhang equation and universality class}},
  Random Matrices Theory Appl. \textbf{1} (2012), no.~01, 1130001. \MR{2930377}

\bibitem{corwin2016fluctuations}
I.~Corwin, Z.~Liu, and D.~Wang, \emph{{Fluctuations of TASEP and LPP with
  general initial data}}, Ann. Appl. Probab. \textbf{26} (2016), no.~4,
  2030--2082. \MR{3543889}

\bibitem{coupier2011multiple}
D.~Coupier, \emph{Multiple geodesics with the same direction}, Electron.
  Commun. Probab. \textbf{16} (2011), 517--527. \MR{2836758}

\bibitem{damron2014busemann}
M.~Damron and J.~Hanson, \emph{Busemann functions and infinite geodesics in
  two-dimensional first-passage percolation}, Comm. Math. Phys. \textbf{325}
  (2014), no.~3, 917--963. \MR{3152744}

\bibitem{damron2017bigeodesics}
\bysame, \emph{Bigeodesics in first-passage percolation}, Comm. Math. Phys.
  \textbf{349} (2017), no.~2, 753--776. \MR{3595369}

\bibitem{ferrari2005competition}
P.~A. Ferrari and L.~P. Pimentel, \emph{Competition interfaces and second class
  particles}, Ann. Probab. \textbf{33} (2005), no.~4, 1235--1254. \MR{2150188}

\bibitem{georgiou2017geodesics}
N.~Georgiou, F.~Rassoul-Agha, and T.~Sepp{\"a}l{\"a}inen, \emph{Geodesics and
  the competition interface for the corner growth model}, Probab. Theory
  Related Fields \textbf{169} (2017), no.~1-2, 223--255. \MR{3704769}

\bibitem{georgiou2017stationary}
\bysame, \emph{Stationary cocycles and busemann functions for the corner growth
  model}, Probab. Theory Related Fields \textbf{169} (2017), no.~1-2, 177--222. \MR{3704768}

\bibitem{hammond2019patchwork}
A.~Hammond, \emph{{A patchwork quilt sewn from Brownian fabric: regularity of
  polymer weight profiles in Brownian last passage percolation}}, Forum Math.
  Pi \textbf{7} (2019). \MR{3987302}

\bibitem{hammond2020exponents}
\bysame, \emph{Exponents governing the rarity of disjoint polymers in brownian
  last passage percolation}, Proc. Lond. Math. Soc.(3) \textbf{120} (2020),
  no.~3, 370--433. \MR{4008374}

\bibitem{hammond2020modulus}
A.~Hammond and S.~Sarkar, \emph{Modulus of continuity for polymer fluctuations
  and weight profiles in {Poissonian} last passage percolation}, Electron. J.
  Probab. \textbf{25} (2020). \MR{4073690}

\bibitem{hoffman2008geodesics}
C.~Hoffman, \emph{Geodesics in first passage percolation}, Ann. Appl. Probab.
  \textbf{18} (2008), no.~5, 1944--1969. \MR{2462555}

\bibitem{johansson2000shape}
K.~Johansson, \emph{Shape fluctuations and random matrices}, Comm. Math. Phys.
  \textbf{209} (2000), no.~2, 437--476. \MR{1737991}

\bibitem{kardar1986dynamic}
M.~Kardar, G.~Parisi, and Y.~C. Zhang, \emph{Dynamic scaling of growing
  interfaces}, Phys. Rev. Lett. \textbf{56} (1986), no.~9, 889.

\bibitem{LR10}
M.~Ledoux and B.~Rider, \emph{Small deviations for beta ensembles}, Electron.
  J. Probab. \textbf{15} (2010), 1319--1343. \MR{2678393}

\bibitem{lowe2001moderate}
M.~L{\"o}we and F.~Merkl, \emph{Moderate deviations for longest increasing
  subsequences: the upper tail}, Comm. Pure Appl. Math. \textbf{54} (2001),
  no.~12, 1488--1519. \MR{1852980}

\bibitem{lowe2002moderate}
M.~L{\"o}we, F.~Merkl, and S.~Rolles, \emph{Moderate deviations for longest
  increasing subsequences: the lower tail}, J. Theoret. Probab. \textbf{15}
  (2002), no.~4, 1031--1047. \MR{1937784}

\bibitem{newman1995surface}
C.~M. Newman, \emph{A surface view of first-passage percolation}, Proceedings
  of the International Congress of Mathematicians, Springer, 1995,
  pp.~1017--1023. \MR{1404001}

\bibitem{pimentel2016duality}
L.~P. Pimentel, \emph{Duality between coalescence times and exit points in
  last-passage percolation models}, Ann. Probab. \textbf{44} (2016), no.~5,
  3187--3206. \MR{3551194}

\bibitem{quastel2014airy}
J.~Quastel and D.~Remenik, \emph{Airy processes and variational problems},
  Topics in Percolative and Disordered Systems, Springer, 2014, pp.~121--171. \MR{3229288}

\bibitem{seppalainen2020coalescence}
T.~Sepp{\"a}l{\"a}inen and X.~Shen, \emph{Coalescence estimates for the corner
  growth model with exponential weights}, Electron. J. Probab. \textbf{25}
  (2020). \MR{4125790}

\bibitem{van1985inequalities}
J.~van~den Berg and H.~Kesten, \emph{Inequalities with applications to
  percolation and reliability}, J. Appl. Probab. (1985), 556--569. \MR{0799280}

\bibitem{wuthrich2002asymptotic}
M.~V. W{\"u}thrich, \emph{Asymptotic behaviour of semi-infinite geodesics for
  maximal increasing subsequences in the plane}, In and out of equilibrium,
  Springer, 2002, pp.~205--226. \MR{1901954}

\end{thebibliography}


\providecommand{\bysame}{\leavevmode\hbox to3em{\hrulefill}\thinspace}


\ACKNO{The author would like to thank his advisor Professor Allan Sly for telling him this problem, and for reading early versions of this paper. The author also thanks anonymous referees for reading this paper carefully, and for their valuable feedback which led to many improvements in the text.}

\end{document}